	%
%
%%%%%%%%%%%%%%%%%%%%%%%%%%%%%%%%%%%%%%%%%%%%%%%%%%%%%%%%%%%%%%%%

%%%%%%%%%%%%%%%%%%%%%%%%%%%%%%%%%%%%%%%%%%%%%%%%%%%%%%%%%%%%%%%%%  
\documentclass[a4paper,12pt]{amsart}  
\usepackage{a4wide,amsmath,amssymb,amsthm}     
\usepackage[utf8]{inputenc}
\usepackage{enumitem}
\usepackage{color}
%\usepackage{arevsym}
%%%%%%%%%%%%%%%%%%%%%%%%%%%%%%%%%%%%%%%%%%%%%%%%%%%%%%%%%%%%%%%%%     

%\addtolength{\oddsidemargin}{-2cm}
%\addtolength{\evensidemargin}{-2cm}
\addtolength{\headheight}{5pt}
\addtolength{\headsep}{.5cm}
\addtolength{\textheight}{1cm}
\addtolength{\textwidth}{.5cm}
\addtolength{\footskip}{.5cm}
\parskip1ex

\theoremstyle{plain}      
 
\newtheorem{thm}{Theorem}[section]

\newtheorem{lemma}[thm]{Lemma}

\theoremstyle{definition}      
\newtheorem{defn}[thm]{Definition}     
\newtheorem{definition}[thm]{Definition}

%%%%%%%%%%%%%%%%%%%%%%%%%%%%%%%%%%%%%%%%%%%%%%%%%%%%%%%%%%%%%%%%%     

\DeclareMathAlphabet{\doba}{U}{msb}{m}{n}

\def\T{\mathrm{T}}

\def\Ric{{\mathrm{Ric}}}

\def\di{{\rm d}}

\let\<\langle 
\let\>\rangle

\newcommand{\definedas}{\mathrel{\raise.095ex\hbox{\rm :}\mkern-5.2mu=}}

\title{}

\author{Andrei Moroianu, Mihaela Pilca}

\address{Andrei Moroianu \\ Université Paris-Saclay, CNRS,  Laboratoire de mathématiques d'Orsay, 91405, Orsay, France, and Institute of Mathematics “Simion Stoilow” of the Romanian Academy, 21 Calea Grivitei, 010702
Bucharest, Romania}
\email{andrei.moroianu@math.cnrs.fr}

\address{Mihaela Pilca\\Fakult\"at f\"ur Mathematik\\
Universit\"at Regensburg\\Universit\"atsstr. 31 
D-93040 Regensburg, Germany}
\email{mihaela.pilca@mathematik.uni-regensburg.de}

\subjclass[2010]{53C18}

\keywords{Conformal product, Weyl structure, Einstein metric}

%%%%%%%%%%%%%%%%%%%%%%%%%%%%%%%%%%%%%%%%%%%%%%%%%%%%%%%%%%%%%%%%%%%%%%%%%

\begin{document}   
	
	\title{Einstein metrics on conformal products}

	\begin{abstract}
		We show that under some natural geometric assumption, Einstein metrics on conformal products of two compact conformal manifolds are warped product metrics.
	\end{abstract}
	\maketitle

	\section{Introduction}
	
	Given two Riemannian manifolds $(M_1,g_1)$, $(M_2,g_2)$, the product $M_1\times M_2$ carries a na\-tural metric $g=g_1+g_2$ (the Riemannian product metric) whose Levi-Civita connection has reducible holonomy. Conversely, the local de Rham theorem states that every Riemannian manifold $(M,g)$ whose Levi-Civita connection has reducible holonomy, is locally isometric to a Riemannian product.
	
	However, things are more complicated in the category of conformal manifolds. The notion of product is no longer canonically defined, and there is no distinguished connection playing the role of the Levi-Civita connection as in Riemannian geometry. Recall that a conformal class on a smooth manifold $M$ is an equivalence class $c$ of Riemannian metrics for the equivalence relation defined by
	$$g\sim g'\iff\exists f\in C^\infty(M),\ g'=e^{2f}g.$$ 
	
	By the very definition, every Riemannian metric $g$ determines a conformal class denoted $[g]$. A linear connection $\nabla$ on $(M,c)$ is called conformal if $\nabla g=-2\theta^g\otimes g$ for every Riemannian metric $g\in c$, where $\theta^g$ is a 1-form called the Lee form of $\nabla$ with respect to $g$. This $1$-form transforms according to the rule $\theta^{g'}=\theta^g-df$ for every other conformally equivalent metric $g'=e^{2f}g$.
	
	In the conformal setting one has to replace the Levi-Civita connection by the set of {\em torsion-free} conformal connections, called Weyl connections. This set is an affine space modeled on the vector space of real 1-forms. A Weyl connection is called closed (resp. exact) if its Lee form with respect to each metric in the conformal class is closed (resp. exact). From the above transformation rule it readily follows that a Weyl connection is closed (resp. exact) if and only if it is locally (resp. globally) the Levi-Civita connection of a metric in the given conformal class.
	
	Unlike the Riemannian case, given two conformal manifolds $(M_1,c_1)$, $(M_2,c_2)$, the product $M_1\times M_2$ is no longer endowed with a natural ``product'' conformal structure, but rather with a set of conformal structures obtained by choosing Riemannian metrics $g_i\in c_i$ and functions $f_i\in C^\infty(M_1\times M_2)$ for $i\in\{1,2\}$ and defining $c=[e^{2f_1}g_1+e^{2f_2}g_2]$. These conformal classes are called conformal product structures. Each conformal product structure carries a unique compatible Weyl connection whose holonomy preserves the decomposition $\T M=\T M_1\oplus \T M_2$ and conversely, every conformal class carrying a Weyl connection with reducible holonomy is locally a conformal product structure \cite[Thm. 4.3]{bm2011}. 
	
	The aim of this paper is to study conformal product structures on compact manifolds $M=M_1\times M_2$ containing an Einstein metric. Since every conformal class on a compact surface contains Einstein metrics, we will implicitly assume throughout the text that dim$(M)\ge 3$. It turns out that this problem can be understood as part of a long term classification project for compact Riemannian manifolds $(M,g)$ with special holonomy, carrying a Weyl connection $\nabla$ (different from the Levi-Civita connection of $g$), which also has special holonomy. 
	
	Some parts of this project have already been carried out recently. Indeed, when $\nabla$ is exact, this reduces to the study of conformal classes containing two non-homothetic Riemannian metrics with special holonomy, and was solved in \cite{mmp2020} and \cite{m2019}. The case where $\nabla$ is closed but non-exact, was solved in \cite{bfm2023}. It is thus natural to consider the remaining case, where $\nabla$ is non-closed.
	
	Using the Merkulov-Schwachhöfer classification \cite{ms1999} of holonomy groups of torsion-free connections applied to the special case of Weyl structures, we obtain that if the dimension of $M$ is different from $4$, then a non-closed Weyl connection $\nabla$ has special holonomy if and only if it is reducible (whence locally the adapted Weyl connection of a conformal product). Moreover, according to the Berger-Simons holonomy theorem, if the Levi-Civita connection of $g$ has special holonomy, then $g$ is either reducible, or Kähler, or Einstein. This last case is thus contained in the problem mentioned above (but of course the inclusion is strict, since not every Einstein metric has special holonomy). 
	
	It turns out that this problem is too hard in full generality. In order to attack it, we make a simplification, namely we assume that the restriction  to one of the factors of the conformal product of the Lee form of the reducible Weyl structure $\nabla$ with respect to $g$, is $\nabla$-parallel in the direction of the second factor. Equivalently, we are looking for Einstein metrics of the form $e^{2f_1}g_1+e^{2f_2}g_2$ on $M_1\times M_2$, where $f_1$ only depends on $M_2$ and $f_2$ is any function on $M_1\times M_2$. Such metrics generalize the so called doubly warped metrics, which have the same expression $e^{2f_1}g_1+e^{2f_2}g_2$, except that $f_1$ is a function on $M_2$ and $f_2$ is a function on $M_1$. Our main result is the following:
	\begin{thm} \label{mainthm}
		Let $(M_1,c_1)$ and $(M_2,c_2)$ by two compact conformal manifolds such that $\mathrm{dim}(M_1\times M_2)\geq 3$ and let $c$ be a conformal product structure on $M_1\times M_2$, with adapted Weyl connection $\nabla$. Assume that $c$ contains an Einstein metric $g$, such that the restriction to $\T M_2$ of the Lee form of $\nabla$ with respect to $g$ is $\nabla$-parallel in the direction of $\T M_1$. Then there exist metrics $h_i\in c_i$ such that $c=[h_1+h_2]$, i.e. the Einstein metric $g$ is conformal to a product metric. 
	\end{thm}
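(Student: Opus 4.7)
The plan is to compute the mixed component of the Ricci tensor of $g$ explicitly, use the Einstein condition to obtain an integrable constraint on the warping, and then close the argument with the horizontal component.

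By the hypothesis (as noted in the excerpt), we may write $g=e^{2f_1(x_2)}g_1+e^{2f_2(x_1,x_2)}g_2$ with $f_1$ depending only on $M_2$. Setting $v=f_2-f_1$, the metric $\hat g:=e^{-2f_1}g=g_1+e^{2v}g_2$ is a twisted product in the class $c$. Elementary manipulation of representatives shows that $c$ contains a product metric $h_1+h_2$ (with $h_i\in c_i$) if and only if $v(x_1,x_2)=V_1(x_1)+V_2(x_2)$; so the theorem reduces to this separability of $v$.

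A direct computation in product coordinates yields
$$\Ric^{\hat g}(\p_a,\p_\alpha)=-(n_2-1)\,\p_a\p_\alpha v$$
for $\p_a$ an $M_1$-coordinate field and $\p_\alpha$ an $M_2$-coordinate field. Since $df_1\in T^*M_2$ and is $x_1$-independent, the conformal change formula for $g=e^{2f_1}\hat g$ gives
$$\Ric^g(\p_a,\p_\alpha)=-(n_2-1)\,\p_a\p_\alpha v+(n-2)\,(\p_a v)(\p_\alpha f_1),\qquad n=\dim M.$$
The Einstein condition forces this to vanish, which rewrites as $\p_\alpha(e^{-\kappa f_1}\p_a v)=0$ with $\kappa=(n-2)/(n_2-1)$. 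Hence $d_1 v=e^{\kappa f_1(x_2)}\eta$, where $\eta$ is a closed 1-form on $M_1$ independent of $x_2$.

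Substituting this expression for $d_1 v$ into the horizontal Einstein equation $\Ric^g(\p_a,\p_b)=\lambda g(\p_a,\p_b)$, and setting $s=e^{\kappa f_1(x_2)}$, yields an identity of symmetric 2-tensors on $M_1$ of the form
$$\Ric^{g_1}-n_2\,s\,D^{g_1}\eta-n_2\,s^2\,\eta\otimes\eta=\tilde\mu(x_1,s)\,g_1.$$
If $f_1$ is constant on $M_2$ then the mixed equation already gives $\p_a\p_\alpha v=0$, so $v$ is separable. Otherwise $s$ varies over an open interval, and taking the $g_1$-traceless part forces each polynomial coefficient in $s$ to be pure trace w.r.t.\ $g_1$; in particular the traceless part of $\eta\otimes\eta$ vanishes pointwise on $M_1$. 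For $n_1\ge 2$, $\eta\otimes\eta$ has rank at most one while $g_1$ has rank $n_1\ge 2$, so this forces $\eta\equiv 0$; then $d_1 v=0$, and $v$ depends only on $x_2$. In either case $v$ is separable, and the theorem follows.

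The main obstacle is the exceptional low-dimensional case $n_1=1$, in which the rank argument is vacuous since $\Sym^2 T^*_{x_1}M_1$ is one-dimensional. Here $M_1=S^1$ and $\kappa=1$; integrating $\p_tv=e^{f_1(x_2)}\eta_1(t)$ over $S^1$ forces $\eta_1$ to have zero mean, so $v(t,x_2)=Q(t)e^{f_1(x_2)}+R(x_2)$ for a smooth periodic $Q$. One must then plug this into the $(t,t)$ Einstein equation, expand in $\mu:=e^{f_1(x_2)}$ (assuming $f_1$ non-constant, else the mixed equation already concludes), and use the compactness of $S^1$ to rule out non-constant smooth $Q$, recovering separability.
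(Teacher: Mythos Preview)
Your strategy for the generic case $n_1\ge 2$, $n_2\ge 2$ is correct and close in spirit to the paper's: both use the mixed Einstein equation to relate $\di_1\di_2 f_2$ to $\di_1 f_2\otimes \di_2 f_1$, and then exploit the horizontal Einstein equation~\eqref{Ric11s} to force $\di_1 f_2=0$ on the set where $\di_2 f_1\ne 0$. Your observation that $\di_1 v=s\,\eta$ with $s=e^{\kappa f_1}$ and $\eta$ independent of $x_2$, followed by reading off the $s^2$-coefficient of the traceless part, is a clean variant of the paper's approach, which instead differentiates~\eqref{eqHess} twice along $M_2$ to isolate $(X_1(f_2))^2=\varphi_3|X_1|^2_{g_1}$. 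Both routes give the same conclusion with comparable effort.

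However, there are two genuine gaps. First, you never treat the case $n_2=1$: your constant $\kappa=(n-2)/(n_2-1)$ is undefined there, and the mixed Einstein equation degenerates to $(\partial_a v)(\partial_\alpha f_1)=0$ pointwise, which does \emph{not} integrate to $\di_1 v=s\,\eta$. The paper handles this separately (its Case~2) by analyzing the closed sets $C_1=\{\di_1 f_2=0\}$ and $C_2=\{\di_2 f_1=0\}$, showing $M=C_1\cup C_2$, and then using~\eqref{lambda} together with integration over the compact slices $M_1\times\{x_2\}$ to conclude $\di_1 f_2=0$.

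Second, your treatment of $n_1=1$ is only a sketch, and the suggested plan (``expand in $\mu=e^{f_1}$ and use compactness of $S^1$'') is far from sufficient. The $(t,t)$-equation~\eqref{Ric11sp} involves $f_2$ (hence $Q$ and $R$) nonlinearly through $e^{2f_2}$ and through $\di_2 f_2$, so no clean polynomial expansion in $\mu$ is available. The paper's Case~3 is in fact the most delicate: one differentiates~\eqref{Ric11sp} three times in $t$, obtaining a sum $A_{-1}+A_0+A_1+A_2=0$ of rational expressions homogeneous of degrees $-1,0,1,2$ in the $t$-derivatives of $f_2$, then uses the relation $\di_2 f_2^{(k)}=f_2^{(k)}\di_2 f_1$ (a consequence of~\eqref{deriv22f2}) to apply $\di_2$ repeatedly and isolate $A_2=8f_2'f_2''=0$. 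This yields $f_2''=0$ on the open set $U$ where $f_2'\ne 0\ne \di_2 f_1$, and further substitution forces $n_2=2$ and ultimately $\Delta_2 f_1=|\di_2 f_1|^2_{g_2}$ on all of $M$, which integrates over $\{x_1\}\times M_2$ to give a contradiction. None of this is captured by your outline.
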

	
By  \cite[Thm. 3.2 and Cor. 3.4]{kr2016}, this can only happen if $g$ is a warped product metric. A complete classification of warped product Einstein metrics on compact manifolds is not yet available, except when the base of the warped product is 1-dimensional.
	
	{\bf Acknowledgments.} This work was supported by the Procope Project No. 57650868 (Germany) /  48959TL (France).

	\section{Preliminaries}
	
	\subsection{Weyl connections}
		A {\it Weyl connection} on a conformal manifold $(M,c)$ is a torsion-free linear connection $\nabla$ which preserves the conformal class $c$ in the sense that for each metric $g\in c$, there exists a unique $1$-form $\theta^g\in\Omega^1(M)$, called the {\it Lee form} of $D$ with respect to $g$, such that 
	\begin{equation}\label{dg}\nabla g=-2\theta^g\otimes g.
	\end{equation}
	
	The Weyl connection $\nabla$ is then related to the Levi-Civita covariant derivative $\nabla^g$ by the well known formula 
	\begin{equation}\label{weylstr}
		\nabla_X=\nabla^g_X+\theta^g(X)\mathrm{Id} + \theta^g\wedge X, \quad \forall X\in  \mathrm{T}M,
	\end{equation}
	where $\theta^g\wedge X$ is the skew-symmetric endomorphism of $\mathrm{T}M$ defined by $$(\theta^g\wedge X)(Y):=\theta^g(Y) X-g(X,Y)(\theta^g)^\sharp.$$
	
	A Weyl connection $D$ is called {\it closed} if it is locally the Levi-Civita connection of a (local) metric in $c$ and is called {\it exact} if it is the Levi-Civita connection of a globally defined metric in~$c$. Equivalently, $D$ is closed (resp. exact) if its Lee form with respect to one (and hence to any) metric in $c$ is  closed (resp. exact).	

	\subsection{Differential operators on products} Let $M=M_1\times M_2$ be a product manifold and let $\pi_i:M\to M_i$ denote the standard projections for $i=1,2$. We denote by $n_1,n_2$ the dimensions of $M_1,M_2$ and by $n:=n_1+n_2$ the dimension of $M$.  
	For each $0\le k\le n$, the  bundle of $k$-forms of $M$ splits into direct sums 
	$$\Lambda^kM=\bigoplus_{p+q=k}\pi_1^*(\Lambda^pM_1)\otimes \pi_2^*(\Lambda^qM_2)=:\Lambda^{p,q}M,$$
	where of course the notation $\Lambda^{p,q}M$ is specific to this product setting, and should not be confused with the Dolbeault decomposition on complex manifolds.
The exterior differential on $M$ maps $C^\infty(\Lambda^{p,q}M)$ onto  $C^\infty(\Lambda^{p+1,q}M\oplus \Lambda^{p,q+1}M)$. The projections on the two factors of this direct sum are first order differential operators denoted by $\di_1$ and $\di_2$ which satisfy the relations:
\begin{equation}\label{dd}\di=\di_1+\di_2,\qquad \di_1^2=\di_2^2=\di_1\di_2+\di_2\di_1=0.\end{equation}
Assume now that $g_1$, $g_2$ are Riemannian metrics on $M_1$, $M_2$. The formal adjoints of $\di_i$ with respect to the Riemannian metric $g_1+g_2$ on $M$ are denoted by $\delta_i$. If $\{e_\alpha\}_{1\le \alpha\le n_1}$ denotes a local orthonormal basis of $\T M_1$, inducing a local frame of the distribution $\pi_1^*(\T M_1)\subset \T M$, then 
\begin{equation}\label{ddelta}
\di_1=\sum_{\alpha=1}^{n_1}e_\alpha\wedge\nabla^{g_1+g_2}_{e_\alpha},\qquad \delta_1=-\sum_{\alpha=1}^{n_1}e_\alpha\lrcorner\nabla^{g_1+g_2}_{e_\alpha}.
\end{equation}
These operators are clearly conjugate with the corresponding operators on the factors, in the sense that if $\omega$ is an exterior form on $M_1$, then 
$$\di_1(\pi_1^*\omega)=\pi_1^*(\di^{M_1} \omega),\qquad \delta_1(\pi_1^*\omega)=\pi_1^*(\delta^{g_1} \omega),\qquad (\di_1\delta_1+\delta_1\di_1)(\pi_1^*\omega)=\pi_1^*(\Delta^{g_1}\omega).$$
We denote by $\Delta_1:=\di_1\delta_1+\delta_1\di_1$.

\begin{lemma}\label{lemmasum}
	If a function $f\in\mathcal{C}^\infty(M)$ satisfies $\di_1\di_2 f=0$, then $f$ is a sum of two functions, each of them depending only on one of the factors, \emph{i.e.} there exist functions $a_i\in\mathcal{C}^{\infty}(M_i)$ such that $f=a_1+a_2$.
\end{lemma}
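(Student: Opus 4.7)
The plan is to reduce the global problem to the obvious local one and then patch using the hypothesis that $d_1 d_2 f = 0$ controls mixed partial derivatives. In a product chart $(x^1,\dots,x^{n_1},y^1,\dots,y^{n_2})$, the identity $d_1 d_2 f = \sum_{i,j}\frac{\partial^2 f}{\partial x^i\partial y^j}\, dx^i\wedge dy^j$ shows immediately that the hypothesis is equivalent to $\frac{\partial^2 f}{\partial x^i\partial y^j}=0$ for all $i,j$, hence locally $f=a_1(x)+a_2(y)$. The only work is to globalize this decomposition.

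First I would reduce, without loss of generality, to the case where $M_1$ and $M_2$ are connected (the general case follows by applying the connected case on each product component $M_1^\alpha\times M_2^\beta$ and then adjusting constants so the pieces glue into functions defined on all of $M_1$ and all of $M_2$, respectively). Fix basepoints $x_0\in M_1$ and $y_0\in M_2$, and set
\[
a_1(x):=f(x,y_0)-f(x_0,y_0),\qquad a_2(y):=f(x_0,y).
\]
Define $F(x,y):=f(x,y)-a_1(x)-a_2(y)$. Then $F$ still satisfies $d_1 d_2 F=0$ (since $a_1$ is killed by $d_2$ and $a_2$ by $d_1$), and by construction $F$ vanishes on the two ``axes'' $M_1\times\{y_0\}$ and $\{x_0\}\times M_2$.

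The core claim is that $F\equiv 0$. Given any $(x,y)\in M$, choose smooth paths $\gamma:[0,1]\to M_1$ from $x_0$ to $x$ and $\delta:[0,1]\to M_2$ from $y_0$ to $y$, and set $G(s,t):=F(\gamma(s),\delta(t))$. A direct computation, using the coordinate formula for $d_1 d_2 F$ above, gives
\[
\frac{\partial^2 G}{\partial s\,\partial t}(s,t)=(d_1 d_2 F)_{(\gamma(s),\delta(t))}\bigl(\dot\gamma(s),\dot\delta(t)\bigr)=0,
\]
so $G(s,t)=G(s,0)+G(0,t)-G(0,0)=0$. Evaluating at $s=t=1$ yields $F(x,y)=0$, whence $f=a_1+a_2$ with $a_i\in\mathcal{C}^\infty(M_i)$.

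I do not expect any serious obstacle here: the only subtle point is the clean identification of the mixed partial derivative along the two paths with the tensor $d_1 d_2 F$, which is a routine chain-rule computation, and the (essentially cosmetic) issue of handling possibly disconnected factors.
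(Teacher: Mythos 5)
Your argument is correct and reaches the conclusion by a genuinely different, more elementary route than the paper. The paper first shows, via the Cartan formula, that $\di_2 f$ is basic for the projection $\pi_2$, i.e. $\di_2 f=\pi_2^*\omega_2$, then pulls back along the slices $\{x_1\}\times M_2$ to see that each slice restriction of $f$ is a primitive of $\omega_2$, and uses connectedness of $M_2$ to conclude that these primitives differ from a fixed one by slice-wise constants $a_1(x_1)$. You instead work with basepoints and a two-parameter path argument: after subtracting the explicit candidates $a_1(x)=f(x,y_0)-f(x_0,y_0)$ and $a_2(y)=f(x_0,y)$, the vanishing of the mixed derivative $\partial_s\partial_t\,F(\gamma(s),\delta(t))=(\di_1\di_2F)(\dot\gamma,\dot\delta)=0$ forces $F\equiv 0$, yielding the clean identity $f(x,y)=f(x,y_0)+f(x_0,y)-f(x_0,y_0)$. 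Your version buys an explicit formula for the decomposition and makes smoothness of $a_1$, $a_2$ immediate (in the paper this requires a closing remark), at the cost of invoking smooth path-connectedness of both factors, whereas the paper's basic-form argument only uses connectedness of $M_2$ and is the formulation that generalizes to forms of higher degree.

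One caveat: your parenthetical reduction of the disconnected case ``by adjusting constants'' does not actually work, and indeed the statement fails if both factors are disconnected. For instance, if $M_1=A\sqcup B$ and $M_2=C\sqcup D$ with all four pieces connected, the function equal to $1$ on $B\times D$ and $0$ on the other three blocks satisfies $\di_1\di_2 f=0$ but admits no global decomposition $a_1+a_2$. Since manifolds are tacitly connected here (the paper's own proof also invokes connectedness), this does not affect the validity of your proof of the lemma as intended, but the remark should be dropped rather than claimed.
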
	
\begin{proof} Let $X_1\in \mathcal{C}^\infty(M_1)$ be any vector field. Clearly $X_1\lrcorner (\di_2 f)=0$ and by the Cartan formula together with \eqref{dd} we can write
$$\mathcal{L}_{X_1}(\di_2f)=\di(X_1\lrcorner (\di_2 f))+X_1\lrcorner(\di \di_2 f)=X_1\lrcorner(\di_1 \di_2 f)=0.$$
This shows that the 1-form $\di_2 f$ is basic with respect to the projection $\pi_2:M_1\times M_2\to M_2$, so there exists a 1-form $\omega_2\in \Omega^1(M_2)$ such that $\di_2f=\pi_2^*\omega_2$. 

For every $x_1\in M_1$ we denote by $i_{x_1}$ the inclusion $M_2\to M_1\times M_2$, given by $x_2\mapsto (x_1,x_2)$. Then $\pi_2\circ i_{x_1}=\mathrm{id}_{M_2}$, whence $\omega_2=i_{x_1}^*\pi_2^*\omega_2=i_{x_1}^*\di_2f=\di^{M_2}(i_{x_1}^*f)$. This shows that for every $x_1\in M_1$, the function $i_{x_1}^*f\in \mathcal{C}^\infty(M_2)$ is a primitive of $\omega_2$. In particular $\omega_2=\di^{M_2}a_2$ is exact on $M_2$, and by connectedness, for every $x_1\in M_1$, there exists a constant $a_1(x_1)$ such that $i_{x_1}^*f=a_1(x_1)+a_2$. In other words $f(x_1,x_2)=a_1(x_1)+a_2(x_2)$ for every $(x_1,x_2)\in M$, and the function $a_1$ is smooth on $M_1$ since $f$ is smooth on $M$.
\end{proof}

	\subsection{Conformal product structures}
	\begin{defn} Consider two conformal manifolds $(M_1,c_1)$ and $(M_2,c_2)$.  A conformal product structure on $M:=M_1\times M_2$ is a conformal class $c$ such that the two canonical projections $\pi_i:(M,c)\to(M_i,c_i)$ are orthogonal conformal submersions. 
	\end{defn}
	Equivalently, for every $x:=(x_1,x_2)\in M$ and for every Riemannian metrics $g_i\in c_i$ and $g\in c$, there exist real numbers $f_1(x),f_2(x)$ such that for all tangent vectors $X_i\in\T_xM_i\subset \T_xM$ one has $g(X_1,X_2)=0$ and $g(X_i,X_i)=e^{2f_i(x)}g_i(X_i,X_i)$. Consequently, for every choice of Riemannian metrics $g_1$ and $g_2$ in the conformal classes $c_1$ and $c_2$, every conformal product structure can be defined by two functions $f_1$ and $f_2$ on $M$ by the formula $c:=[g]$, where $g:=e^{2f_1}g_1+e^{2f_2}g_2$. Clearly, the conformal class $c$ only depends on the difference $f_1-f_2$. This motivates the following definition:
	\begin{definition}
		Let $M_1$ and $M_2$ be two manifolds. A Riemannian metric $g$ on $M_1\times M_2$ of the form $g=e^{2f_1}g_1+e^{2f_2}g_2$, where $f_1$ and $f_2$ are functions on $M_1\times M_2$ and $g_1$, $g_2$ are Riemannian metrics on $M_1$, resp. $M_2$, is called a \emph{conformal product metric.}
	\end{definition}

For $i\in\{1,2\}$, the vector fields on $M_i$ will be denoted by the index $i$, \emph{e.g.} $X_i, Y_i, Z_i$. Each vector field $X_i$ on $M_i$ naturally induces a vector field on $M$, denoted by $\widetilde{X_i}$, so we have the inclusion $\mathcal{C}^{\infty}(\mathrm{T}M_i)\subset \mathcal{C}^{\infty}(\mathrm{T}(M_1\times M_2))$. Let us remark that the Lie bracket between vector fields arising from the different factors $M_1$ and $M_2$ vanishes, \emph{e.g.}  $[\widetilde{X_1},\widetilde{X_2}]=0$. In order to keep the notation as simple as possible, we will identify from now on each vector field $X_i$ on $M_i$ with the corresponding vector field $\widetilde{X_i}$ on $M$ and in the sequel it will be clear from the context whether the vector field is considered on $M$ or on one of its factors.

\begin{lemma}\label{lemmaLC}
	The Levi-Civita connection $\nabla^g$ of a conformal product metric $g=e^{2f_1}g_1+e^{2f_2}g_2$ is given by the following formulas, for all vector fields $X_i, Y_i\in\mathcal{C}^{\infty}(\mathrm{T}M_i)$, $i\in\{1,2\}$:
\begin{equation}\label{connLC1}
	\nabla^g_{X_1}Y_1=\nabla^{g_1}_{X_1}Y_1+X_1(f_1)Y_1+Y_1(f_1)X_1-g(X_1, Y_1)\di f_1^{\#_g},
\end{equation}	
\begin{equation}\label{connLC1bis}
	\nabla^g_{X_2}Y_2=\nabla^{g_2}_{X_2}Y_2+X_2(f_2)Y_2+Y_2(f_2)X_2-g(X_2, Y_2)\di f_2^{\#_g},
\end{equation}	
\begin{equation}\label{connLC2}
	\nabla^g_{X_1}X_2=\nabla^g_{X_2}X_1=X_1(f_2)X_2+X_2(f_1)X_1.
\end{equation}	
In particular, from \eqref{connLC1} it follows that
\begin{equation}\label{connLC11}
	g(\nabla^g_{X_1}Y_1, X_2)=-g(X_1, Y_1)X_2(f_1).
\end{equation}	
\end{lemma}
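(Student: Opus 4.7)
The plan is to apply the Koszul formula
\begin{equation*}
2g(\nabla^g_X Y, Z) = X(g(Y,Z))+Y(g(X,Z))-Z(g(X,Y))+g([X,Y],Z)-g([X,Z],Y)-g([Y,Z],X)
\end{equation*}
directly and to read off each identity by testing with $Z$ tangent to $M_1$ or to $M_2$. Three elementary features of the product structure will be used throughout: (i) vector fields coming from different factors are $g$-orthogonal and have vanishing Lie bracket; (ii) if $X_i, Y_i$ are extensions of vector fields on $M_i$, then the function $g_i(X_i,Y_i)$ depends only on the $M_i$-coordinates, so it is killed by vector fields on the other factor; (iii) Lie brackets of vector fields on the same factor remain tangent to that factor.

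For \eqref{connLC1}, I fix $X_1, Y_1 \in\mathcal{C}^\infty(\mathrm{T}M_1)$ and test against $Z = Z_1 + Z_2$ with $Z_i\in\mathcal{C}^\infty(\mathrm{T}M_i)$. When $Z = Z_1$, the three bracket terms together with the parts of the first three derivative terms that hit $g_1(\cdot,\cdot)$ assemble, via the Koszul formula for $g_1$, into $2e^{2f_1}g_1(\nabla^{g_1}_{X_1}Y_1,Z_1) = 2g(\nabla^{g_1}_{X_1}Y_1, Z_1)$; the remaining contributions come from derivatives hitting the conformal factor $e^{2f_1}$ and give exactly $2g(X_1(f_1)Y_1 + Y_1(f_1)X_1 - g(X_1, Y_1)\di f_1^{\#_g}, Z_1)$. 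When $Z = Z_2$, every term vanishes by (i) and (iii) except $-Z_2(g(X_1, Y_1)) = -2Z_2(f_1)g(X_1, Y_1)$, which matches the $Z_2$-component of the claimed right-hand side. This establishes \eqref{connLC1}; identity \eqref{connLC1bis} follows by swapping the roles of the two factors.

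For \eqref{connLC2}, I apply Koszul to $\nabla^g_{X_1} X_2$. Item (i) kills every term except $X_2(g(X_1, Z_1))=2X_2(f_1)g(X_1, Z_1)$ when $Z=Z_1$, and the symmetric $X_1(g(X_2, Z_2))=2X_1(f_2)g(X_2, Z_2)$ when $Z=Z_2$; this gives the claimed expression for $\nabla^g_{X_1}X_2$. The equality $\nabla^g_{X_1}X_2 = \nabla^g_{X_2}X_1$ then follows from torsion-freeness together with $[X_1, X_2]=0$. Finally, \eqref{connLC11} is an immediate consequence of \eqref{connLC1}: the first three terms on its right-hand side lie in $\mathrm{T}M_1$, hence are $g$-orthogonal to $X_2$, leaving only $-g(X_1, Y_1)\di f_1(X_2) = -g(X_1, Y_1)X_2(f_1)$.

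The computation is mechanical and no serious obstacle is anticipated; the only point requiring a bit of bookkeeping is separating, in the expansion of the first three Koszul terms, the derivatives of $e^{2f_i}$ from those of $g_i(\cdot, \cdot)$, and recognizing that the latter reassemble into the Koszul formula for $g_i$ itself.
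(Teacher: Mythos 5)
Your proposal is correct and follows essentially the same route as the paper: expand the Koszul formula for $g$, test against vectors tangent to each factor separately, let the $g_1$-derivative and bracket terms reassemble into the Koszul formula for $g_1$ (giving $\nabla^{g_1}$), and keep the derivatives of the conformal factors, with \eqref{connLC1bis} by symmetry and \eqref{connLC11} by orthogonality of the factors. The only cosmetic difference is that you invoke torsion-freeness and $[X_1,X_2]=0$ explicitly for $\nabla^g_{X_1}X_2=\nabla^g_{X_2}X_1$, which the paper leaves implicit.
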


\begin{proof}
We consider vector fields  $X_1, Y_1, Z_1\in\mathcal{C}^{\infty}(\mathrm{T}M_1)$ and \mbox{$X_2, Y_2, Z_2\in\mathcal{C}^{\infty}(\mathrm{T}M_2)$} and we compute using the Koszul formula:
\begin{equation*}
	2g(\nabla^g_{X_1} Y_1, Z_2)=-Z_2(g(X_1, Y_1))=-Z_2(e^{2f_1}g_1(X_1, Y_1))=-2\di f_1(Z_2)g(X_1, Y_1),
\end{equation*}	
\begin{equation*}
\begin{split}	
	2g(\nabla^g_{X_1} Y_1, Z_1)=&X_1(g(Y_1, Z_1))+Y_1(g(X_1, Z_1))-Z_1(g(X_1, Y_1))\\
	&+g([X_1, Y_1], Z_1)-g([X_1, Z_1], Y_1)-g([Y_1, Z_1], X_1)\\
	=&2 g(\nabla^{g_1}_{X_1} Y_1, Z_1)+2X_1(f_1)g(Y_1, Z_1)+2Y_1(f_1)g(X_1, Z_1)-2Z_1(f_1)g(X_1, Y_1),
\end{split}	
\end{equation*}	
which together yield \eqref{connLC1}. The Equation \eqref{connLC1bis} follows then by symmetry, permuting the indexes in \eqref{connLC1}. We further compute using the Koszul formula:
\begin{equation*}
		2g(\nabla^g_{X_1} X_2, Y_1)=X_2(g(X_1, Y_1))=X_2(e^{2f_1}g(X_1, Y_1))=2X_2(f_1)g(X_1, Y_1),
\end{equation*}	
\begin{equation*}
	2g(\nabla^g_{X_1} X_2, Y_2)=X_1(g(X_2, Y_2))=X_1(e^{2f_2}g(X_2, Y_2))=2X_1(f_2)g(X_2, Y_2),
\end{equation*}	
which together yield \eqref{connLC2}.
\end{proof}

	For every conformal product structure $c=[e^{2f_1}g_1+e^{2f_2}g_2]$ on $M$, there exists a unique Weyl connection $\nabla$ whose holonomy preserves the decomposition $\T M=\T M_1\oplus \T M_2$. This connection is called  {\em adapted} and its Lee form with respect to $g:=e^{2f_1}g_1+e^{2f_2}g_2$ reads $\theta^g=-\di_1f_2-\di_2 f_1$ (cf. \cite[\S 6.1]{bm2011}). Note that the adapted Weyl connection is closed if and only if $\di_1\di_2(f_1-f_2)=0$.
	
Let us remark that the vector fields tangent to one of the two factors $M_1$ or $M_2$ are parallel with respect to the adapted Weyl connection in the direction of the other factor, namely for all $X_i\in\mathcal{C}^{\infty}(\mathrm{T}M_i)$ we have:
\begin{equation}\label{nabla12}
	\begin{split}
		\nabla_{X_1} X_2&\overset{\eqref{weylstr}}{=}\nabla^g_{X_1} X_2+\theta^g(X_1)X_2+\theta^g(X_2)X_1\\
		&\overset{\eqref{connLC2}}{=}X_1(f_2)X_2+X_2(f_1)X_1-X_1(f_2)X_2-X_2(f_1)X_1=0.
	\end{split}	
\end{equation}
	
	\subsection{Curvature of conformal product metrics}
	The purpose of this section is to establish the formulas for the Riemannian curvature tensor and the Ricci curvature of a conformal product metric, which generalize the well-known O'Neill formulas for warped products. Let $g$ be such a metric given as $g=e^{2f_1}g_1+e^{2f_2}g_2$ on \mbox{$M$}.
	We start by computing the Riemannian curvature tensor of $g$. 
	
	\begin{lemma}\label{lemmariemtensor}
		The Riemannian curvature tensor $R^g$ of the metric $g=e^{2f_1}g_1+e^{2f_2}g_2$ on $M$ is given by the following formulas, for all vector fields $X_1, Y_1, Z_1\in\mathcal{C}^{\infty}(\mathrm{T}M_1)$ which are $\nabla^{g_1}$-parallel at the point where the computation is done and $X_2, Y_2, Z_2\in\mathcal{C}^{\infty}(\mathrm{T}M_2)$, which are $\nabla^{g_2}$-parallel at the same point:
		\begin{equation}\label{Riem1112}
			\begin{split}
			R^g(X_1, Y_1, Z_1, X_2)=&g(X_1, Z_1)[Y_1(X_2(f_1))-Y_1(f_2)\cdot X_2(f_1)]\\
			&-g(Y_1, Z_1)[X_1(X_2(f_1))-X_1(f_2)\cdot X_2(f_1)],
		\end{split}
		\end{equation}	
	\smallskip
	\begin{equation}\label{Riem1111}
		\begin{split}
			R^g(X_1, Y_1, Y_1, X_1)=&R^{g_1}(X_1, Y_1, Y_1, X_1)+2X_1(Y_1(f_1))g(X_1,Y_1)+(g(X_1, Y_1))^2|\di f_1|^2_g\\
			&-2X_1(f_1)\cdot Y_1(f_1)\,g(X_1, Y_1)
			- X_1(X_1(f_1))\, |Y_1|^2_g- Y_1(Y_1(f_1))\, |X_1|^2_g\\
			&+(X_1(f_1))^2|Y_1|^2_g+(Y_1(f_1))^2|X_1|^2_g-|\di f_1|^2_g|X_1|^2_g|Y_1|^2_g,
				\end{split}
	\end{equation}	
	\smallskip
\begin{equation}\label{Riem1221}
\begin{split}
	R^g(X_1, X_2, X_2, X_1)=&- (X_1(f_2))^2\, |X_2|_g^2-X_1(X_1(f_2))\, |X_2|_g^2+2 X_1(f_1)\cdot X_1(f_2)|X_2|_g^2\\
	&-X_2(X_2(f_1))\, |X_1|_g^2+2X_2(f_1)\cdot X_2(f_2)\, |X_1|_g^2-(X_2(f_1))^2\, |X_1|_g^2\\
	&-g(\di f_1, \di f_2)\, |X_1|_g^2|X_2|^2_g.
\end{split}
\end{equation}	

By symmetry, permuting the indexes, we also obtain the analogous formulas to \eqref{Riem1112} and~\eqref{Riem1111}:

\begin{equation}\label{Riem2221}
	\begin{split}
	R^g(X_2, Y_2, Z_2, X_1)=&g(X_2, Z_2)[Y_2(X_1(f_2))-Y_2(f_1)\cdot X_1(f_2)]\\
	&-g(Y_2, Z_2)[X_2(X_1(f_2))-X_2(f_1)\cdot X_1(f_2)],
\end{split}
\end{equation}	
\smallskip
\begin{equation}\label{Riem2222}
	\begin{split}
		R^g(X_2, Y_2, Y_2, X_2)=&R^{g_2}(X_2, Y_2, Y_2, X_2)+2X_2(Y_2(f_2))g(X_2,Y_2)+(g(X_2, Y_2))^2|\di f_2|^2_g\\
		&-2X_2(f_2)\cdot Y_2(f_2)\,g(X_2, Y_2)
		- X_2(X_2(f_2))\, |Y_2|^2_g- Y_2(Y_2(f_2))\, |X_2|^2_g\\
		&+(X_2(f_2))^2|Y_2|^2_g+(Y_2(f_2))^2|X_2|^2_g-|\di f_2|^2_g|X_2|^2_g|Y_2|^2_g.
	\end{split}
\end{equation}	
	\end{lemma}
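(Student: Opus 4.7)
The strategy is direct computation from
\[
R^g(X,Y)Z=\nabla^g_X\nabla^g_Y Z-\nabla^g_Y\nabla^g_X Z-\nabla^g_{[X,Y]}Z,
\]
plugging in the Levi--Civita formulas from Lemma \ref{lemmaLC}. The normalization that each $X_i,Y_i,Z_i$ is $\nabla^{g_i}$-parallel at the base point is the key bookkeeping device: at that point the Lie brackets $[X_1,Y_1]$ and $[X_2,Y_2]$ vanish (torsion-freeness of $\nabla^{g_i}$), the terms involving $\nabla^{g_i}_{X_i}Y_i$ in \eqref{connLC1}--\eqref{connLC1bis} drop out, and the remaining $f$-dependent parts are easy to differentiate. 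The cross brackets $[X_1,X_2]$ vanish identically.

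For \eqref{Riem1112} I would expand $g(\nabla^g_{X_1}\nabla^g_{Y_1}Z_1-\nabla^g_{Y_1}\nabla^g_{X_1}Z_1,X_2)$. By \eqref{connLC1}, the only part of $\nabla^g_{Y_1}Z_1$ that can have a $\T M_2$-component is $-g(Y_1,Z_1)\di f_1^{\#_g}$. When one applies $\nabla^g_{X_1}$ to this and projects on $X_2$, the $\T M_2$-part of $\di f_1^{\#_g}$ gets differentiated in the $X_1$-direction via \eqref{connLC2}, producing the term $-g(Y_1,Z_1)[X_1(X_2(f_1))-X_1(f_2)X_2(f_1)]$; antisymmetrization in $(X_1,Y_1)$ gives \eqref{Riem1112}. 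Formula \eqref{Riem2221} follows by interchanging the two factors.

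For \eqref{Riem1111} all arguments lie in $\T M_1$, but one must be careful that $f_1$ depends on both factors and that norms are with respect to $g$ rather than $g_1$. Applying \eqref{connLC1} twice and antisymmetrizing yields $R^{g_1}(X_1,Y_1,Y_1,X_1)$ from the pure Levi-Civita part, while the $f_1$-correction terms give, after using $\nabla^g_{X_1}\di f_1^{\#_g}$ and \eqref{connLC2}, precisely the combination on the right-hand side; in particular the $|\di f_1|_g^2$ contributions pick up both $\T M_1$ and $\T M_2$ components of $\di f_1$, which is why they enter as $g$-norms.

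For the mixed two-plane curvature \eqref{Riem1221} I would compute $g(\nabla^g_{X_1}\nabla^g_{X_2}X_2,X_1)-g(\nabla^g_{X_2}\nabla^g_{X_1}X_2,X_1)$ using \eqref{connLC1bis} for $\nabla^g_{X_2}X_2$ (its $g_2$-Christoffel part vanishes by the parallelism assumption), then \eqref{connLC2} for $\nabla^g_{X_1}X_2$, and \eqref{connLC11} to evaluate $g(\nabla^g_{X_1}\cdot,X_1)$ whenever the first argument sits in $\T M_1$. This produces the six quadratic-in-$f$ terms together with the cross term $-g(\di f_1,\di f_2)|X_1|_g^2|X_2|_g^2$ coming from the inner product of the $\T M_1$-part of $\di f_2^{\#_g}$ with the $\T M_2$-part of $\di f_1^{\#_g}$. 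No conceptual difficulty arises in any of these computations; the main obstacle is purely the algebraic bookkeeping of the many interaction terms between $\di f_1$ and $\di f_2$, and the careful distinction between $g$-norms and $g_i$-norms of the vector fields, exactly as in the classical O'Neill formulas for warped products, except that here both ``warping'' functions depend on both factors.
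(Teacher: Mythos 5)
Your proposal is correct and follows essentially the same route as the paper: a direct curvature computation at a point where the fields are $\nabla^{g_i}$-parallel (so all brackets vanish there), plugging in the connection formulas of Lemma \ref{lemmaLC}; the paper merely organizes \eqref{Riem1112} as $-g(\nabla^g_{X_1}\nabla^g_{Y_1}X_2,Z_1)+g(\nabla^g_{Y_1}\nabla^g_{X_1}X_2,Z_1)$ instead of projecting $\nabla^g\nabla^g Z_1$ onto $X_2$, which is only a bookkeeping difference. One small caveat: in your sketch of \eqref{Riem1112} the $\T M_1$-components of $\nabla^g_{Y_1}Z_1$ also acquire a $\T M_2$-part when hit with $\nabla^g_{X_1}$ (via \eqref{connLC11}), so they do contribute terms before antisymmetrization; these cancel in the antisymmetrized expression, so your conclusion stands unchanged.
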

	
\begin{proof}	
	Since $X_1$ and $Y_1$ are $\nabla^{g_1}$-parallel at the point where the computation is done, we have by the definition of the Riemannian curvature tensor:
	\begin{equation}\label{defriemcurv}
		R^g(X_1, Y_1, Z_1, X_2)=-R^g(X_1, Y_1, X_2, Z_1)=-g(\nabla^g_{X_1}\nabla^g_{Y_1} X_2, Z_1)+g(\nabla^g_{Y_1}\nabla^g_{X_1} X_2, Z_1).
	\end{equation}	
	The first term on the right-hand side is then computed by applying the formulas obtained for the Levi-Civita connection in Lemma \ref{lemmaLC}:
	\begin{equation*}
		\begin{split}
			g(\nabla^g_{X_1}\nabla^g_{Y_1} X_2, Z_1)&\overset{\eqref{connLC2}}{=}g(\nabla^g_{X_1}(Y_1(f_2)X_2+X_2(f_1)Y_1), Z_1)\\
			&=Y_1(f_2)g(\nabla^g_{X_1}X_2, Z_1)+X_1(X_2(f_1))g(Y_1, Z_1)+X_2(f_1)g(\nabla^g_{X_1}Y_1, Z_1)\\
			&\overset{\eqref{connLC1}, \eqref{connLC2}}{=}Y_1(f_2)X_2(f_1)g(X_1, Z_1)+X_1(X_2(f_1)) g(Y_1, Z_1)+X_2(f_1)X_1(f_1)g(Y_1, Z_1)\\
			&\phantom{\overset{\eqref{connLC1}, \eqref{connLC2}}{=}}+X_2(f_1)Y_1(f_1)g(X_1, Z_1)-X_2(f_1)Z_1(f_1)g(X_1, Y_1).
		\end{split}
	\end{equation*}
	Replacing this  formula and the one obtained from it by interchanging the roles of $X_1$ and $Y_1$ into \eqref{defriemcurv} we obtain \eqref{Riem1112}.
	We now show \eqref{Riem1111} by computing as follows:
\begin{equation*}
	\begin{split}
		&R^g(X_1, Y_1, Y_1, X_1)=	g(\nabla^g_{X_1}\nabla^g_{Y_1} Y_1, X_1)-	g(\nabla^g_{Y_1}\nabla^g_{X_1} Y_1, X_1)\\	
		&=X_1(g(\nabla^g_{Y_1}Y_1, X_1))-g(\nabla^g_{Y_1}Y_1, \nabla^g_{X_1}X_1)-Y_1(g(\nabla^g_{X_1}Y_1, X_1))+g(\nabla^g_{X_1}Y_1, \nabla^g_{Y_1} X_1)\\
		&\overset{\eqref{connLC1}}{=}X_1(g(\nabla^{g_1}_{Y_1}Y_1, X_1))+2X_1(Y_1(f_1)g(X_1,Y_1))-X_1(|Y_1|_{g}^2  X_1(f_1))\\
		&\phantom{xx}-g(2X_1(f_1)X_1-|X_1|^2_g\,\di f_1, 2Y_1(f_1)Y_1-|Y_1|^2_g\,\di f_1)\\
		&\phantom{xx}-Y_1(g(\nabla^{g_1}_{X_1}Y_1, X_1))-Y_1(Y_1(f_1)|X_1|^2_{g})\\
		&\phantom{xx}+(X_1(f_1))^2|Y_1|^2_g+(Y_1(f_1))^2|X_1|^2_g+(g(X_1, Y_1))^2|\di f_1|^2_g-2X_1(f_1)\cdot Y_1(f_1)\,g(X_1, Y_1)\\
		&=R^{g_1}(X_1, Y_1, Y_1, X_1)\\
		&\phantom{xx}+2X_1(Y_1(f_1))g(X_1,Y_1)+4X_1(f_1)\cdot Y_1(f_1)g(X_1, Y_1)- X_1(X_1(f_1))\, |Y_1|^2_g-2(X_1(f_1))^2|Y_1|^2_g\\
		&\phantom{xx}-4X_1(f_1)\cdot Y_1(f_1)g(X_1, Y_1)+2(X_1(f_1))^2|Y_1|^2_g+2(Y_1(f_1))^2|X_1|^2_g-|\di f_1|^2_g|X_1|^2_g|Y_1|^2_g\\
		&\phantom{xx}-Y_1(Y_1(f_1))|X_1|^2_g-2(Y_1(f_1))^2|X_1|^2_g\\
		&\phantom{xx}+(X_1(f_1))^2|Y_1|^2_g+(Y_1(f_1))^2|X_1|^2_g+(g(X_1, Y_1))^2|\di f_1|^2_g-2X_1(f_1)\cdot Y_1(f_1)\,g(X_1, Y_1),
	\end{split}
\end{equation*}			
which yields \eqref{Riem1111}. Similarly, we compute:
\begin{equation*}
\begin{split}
&R^g(X_1, Y_2, Y_2, X_1)=	g(\nabla^g_{X_1}\nabla^g_{Y_2} Y_2, X_1)-	g(\nabla^g_{Y_2}\nabla^g_{X_1} Y_2, X_1)\\	
&=X_1(g(\nabla^g_{Y_2}Y_2, X_1))-g(\nabla^g_{Y_2}Y_2, \nabla^g_{X_1}X_1)-Y_2(g(\nabla^g_{X_1}Y_2, X_1))+g(\nabla^g_{X_1}Y_2, \nabla^g_{Y_2} X_1)\\
&\overset{\eqref{connLC1}, \eqref{connLC1bis}}{=}-X_1(|Y_2|_g^2 X_1(f_2))+2|Y_2|_g^2\,X_1(f_1)\cdot X_1(f_2)+2|X_1|_g^2\,Y_2(f_1)\cdot Y_2(f_2)-|X_1|_g^2|Y_2|^2_g\, g(\di f_1, \di f_2)\\
&\phantom{XXx}-Y_2(|X_1|^2_gY_2(f_1))+(X_1(f_2))^2|Y_2|_g^2+(Y_2(f_1))^2|X_1|_g^2\\
&=- (X_1(f_2))^2\, |Y_2|_g^2-X_1(X_1(f_2))\, |Y_2|_g^2-(Y_2(f_1))^2\, |X_1|_g^2-Y_2(Y_2(f_1))\, |X_1|_g^2\\
&\phantom{xx}+2 X_1(f_1)\cdot X_1(f_2)|Y_2|_g^2+2Y_2(f_1)\cdot Y_2(f_2)\, |X_1|_g^2-g(\di f_1, \di f_2)\, |X_1|_g^2|Y_2|^2_g,
\end{split}
\end{equation*}	
which yields \eqref{Riem1221}.
\end{proof}

\begin{lemma}\label{lemmaricci}
	The Ricci curvature tensor $\mathrm{Ric}^g$ of a conformal product  metric $g=e^{2f_1}g_1+e^{2f_2}g_2$ on $M$ is given by the following formulas, for each vector fields $X_1\in\mathcal{C}^{\infty}(\mathrm{T}M_1)$ and $X_2\in\mathcal{C}^{\infty}(\mathrm{T}M_2)$:
	\begin{equation}\label{Ric12}
		\mathrm{Ric}^g(X_1,X_2)=(1-n_1)X_1(X_2(f_1))+(1-n_2)X_2(X_1(f_2))+(2-n)X_1(f_2)\cdot X_2(f_1),
	\end{equation}	
%\smallskip
 \begin{equation}\label{Ric11}
	\begin{split}
		\mathrm{Ric}^g(X_1,X_1)&=\mathrm{Ric}^{g_1}(X_1,X_1)+ ( e^{-2f_2}\Delta_2f_1+e^{-2f_1}\Delta_1f_1)|X_1|_{g}^2+(2-n_2)g(\di_2 f_1, \di_2 f_2)|X_1|_g^2\\
		&\phantom{=}-[n_2\,g(\di_1f_1, \di_1 f_2)
		+n_1|\di_2f_1|_{g}^2-(2-n_1)|\di_1 f_1|_{g}^2]|X_1|_g^2\\
		&\phantom{=}+(2-n_1)[\mathrm{Hess}^{g_1}(f_1)(X_1, X_1)-(X_1(f_1))^2]\\
		&\phantom{=}-n_2[\mathrm{Hess}^{g_1}(f_2)(X_1, X_1)+(X_1(f_2))^2-2X_1(f_1)\cdot X_1(f_2)],
	\end{split}
 \end{equation}	
where for every function $f\in \mathcal{C}^\infty(M)$ and vector field $X_1\in \mathcal{C}^{\infty}(\mathrm{T}M_1)\subset \mathcal{C}^{\infty}(\mathrm{T}M)$, we denote by $\mathrm{Hess}^{g_1}(f)(X_1, X_1):=X_1(X_1(f))-(\nabla^{g_1}_{X_1}X_1)(f)$ the Hessian with respect to $g_1$ of the restriction of $f$ to the $M_1$-leaves of $M$.
\end{lemma}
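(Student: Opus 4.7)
The plan is to compute $\mathrm{Ric}^g$ as a $g$-trace of the Riemann tensor, using the componentwise formulas of Lemma~\ref{lemmariemtensor}. Fix a point $p=(p_1,p_2)\in M$ and pick $g_i$-orthonormal frames $\{e_\alpha\}_{\alpha=1}^{n_1}$ on $M_1$ and $\{\epsilon_a\}_{a=1}^{n_2}$ on $M_2$ which are $\nabla^{g_1}$-, respectively $\nabla^{g_2}$-parallel at $p_1,p_2$. Rescaling to $E_\alpha:=e^{-f_1(p)}e_\alpha$ and $E_a:=e^{-f_2(p)}\epsilon_a$ produces a $g$-orthonormal basis of $T_pM$ compatible with $\T M=\T M_1\oplus\T M_2$, so that
\[\mathrm{Ric}^g(X,Y)|_p=\sum_\alpha R^g(E_\alpha,X,Y,E_\alpha)|_p+\sum_a R^g(E_a,X,Y,E_a)|_p.\]

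For the mixed term \eqref{Ric12}, use the antisymmetries of $R^g$ to rewrite $R^g(E_\alpha,X_1,X_2,E_\alpha)=R^g(X_1,E_\alpha,E_\alpha,X_2)$ and $R^g(E_a,X_1,X_2,E_a)=R^g(X_2,E_a,E_a,X_1)$, and apply \eqref{Riem1112}, \eqref{Riem2221} with $Y_1=Z_1=E_\alpha$, respectively $Y_2=Z_2=E_a$. The resulting sums collapse via the standard identity $\sum_\alpha g(X_1,E_\alpha)E_\alpha(F)=X_1(F)$ for any smooth $F$, $\sum_\alpha 1=n_1$, and their $M_2$ analogues, producing the formula immediately.

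For $\mathrm{Ric}^g(X_1,X_1)$, the pair-swap symmetry turns each summand into $R^g(X_1,E_\alpha,E_\alpha,X_1)$, respectively $R^g(X_1,E_a,E_a,X_1)$, expanded by \eqref{Riem1111} and \eqref{Riem1221}. In the $M_1$-trace, the $R^{g_1}$-pieces trace up to $\mathrm{Ric}^{g_1}(X_1,X_1)$; the $Y_1(Y_1(f_1))|X_1|^2_g$ pieces produce $e^{-2f_1}\Delta_1 f_1\,|X_1|^2_g$, using $\sum_\alpha E_\alpha(E_\alpha F)|_p=-e^{-2f_1(p)}\Delta_1 F|_p$ from \eqref{ddelta} and the parallelism of $\{e_\alpha\}$; the $-X_1(X_1(f_1))|Y_1|^2_g$ and $2X_1(Y_1(f_1))g(X_1,Y_1)$ pieces combine into $(2-n_1)\mathrm{Hess}^{g_1}(f_1)(X_1,X_1)$ (identifying the Hessian at $p$ via the parallelism $\nabla^{g_1}_{X_1}X_1|_p=0$ of the chosen extension); and the quadratic first-order terms assemble into $-(2-n_1)(X_1(f_1))^2$ together with $|\di_1 f_1|^2_g$- and $|\di_2 f_1|^2_g$-contributions. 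The $M_2$-trace of \eqref{Riem1221} produces $e^{-2f_2}\Delta_2 f_1\,|X_1|^2_g$, the Hessian term $-n_2[\mathrm{Hess}^{g_1}(f_2)(X_1,X_1)+(X_1(f_2))^2-2X_1(f_1)X_1(f_2)]$, and the contractions $(2-n_2)g(\di_2 f_1,\di_2 f_2)$, $-n_2 g(\di_1 f_1,\di_1 f_2)$ obtained after splitting $g(\di f_1,\di f_2)$ into its $M_1$- and $M_2$-parts. Combining the $M_1$- and $M_2$-contributions recovers all coefficients in \eqref{Ric11}.

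The main obstacle is the careful bookkeeping required in this second trace: tracking the exponential factors $e^{-2f_i}$ that arise when passing between $g_i$- and $g$-orthonormal frames (these produce the correct prefactors in front of the Laplace terms $\Delta_i f_1$), identifying the Hessian terms via the parallelism of the frames at $p$, and assembling the many quadratic first-order pieces into the invariant $g$-contractions $|\di_i f_j|^2_g$ and $g(\di_i f_1,\di_i f_2)$.
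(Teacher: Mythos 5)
Your proposal is correct and follows essentially the same route as the paper: both trace the curvature formulas of Lemma~\ref{lemmariemtensor} over an orthonormal frame adapted to the splitting $\T M=\T M_1\oplus\T M_2$ (your point-rescaled frame $E_\alpha=e^{-f_1(p)}e_\alpha$, $E_a=e^{-f_2(p)}\epsilon_a$ is just the paper's $e^{-2f_i}$-weighted sums over $g_i$-orthonormal frames), and the identifications you make — $\sum_\alpha E_\alpha(E_\alpha F)=-e^{-2f_1}\Delta_1 F$ at the point, the Hessian via frames and $X_1$ parallel at $p$, and the splitting of $g(\di f_1,\di f_2)$ into its $\Lambda^{1,0}$ and $\Lambda^{0,1}$ parts — reproduce exactly the coefficients of \eqref{Ric12} and \eqref{Ric11}. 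Your explicit remark that the second-order terms are only identified with $\mathrm{Hess}^{g_1}$ and $\Delta_i$ after choosing the frame and $X_1$ parallel at the point, tensoriality then giving the general statement, is a point the paper leaves implicit, so no gap.
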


\begin{proof}
Considering a local $g$-orthonormal basis of the form $\{e^{-f_1}\alpha_i,e^{-f_2} \beta_j\}_{1\leq i\leq n_1, 1\leq j\leq n_2}$, where $\{\alpha_i\}_{1\leq i\leq n_1}$ is a local $g_1$-orthonormal basis on $M_1$ and  $\{\beta_j\}_{1\leq j\leq n_2}$  is a local $g_2$-orthonormal basis on $M_2$, we write:
\begin{equation}\label{eqric}
	\mathrm{Ric}^g(X_1,X_2)=e^{-2f_1}\sum_{i=1}^{n_1}\mathrm{R}^g(X_1, \alpha_i, \alpha_i, X_2)+e^{-2f_2}\sum_{j=1}^{n_2}\mathrm{R}^g(X_1, \beta_j, \beta_j, X_2).
\end{equation}	
We compute separately the first term on the right-hand side of \eqref{eqric} using the formulas obtained for the Riemannian curvature tensor in Lemma~\ref{lemmariemtensor}:
\begin{equation*}
	\begin{split}
		e^{-2f_1}\sum_{i=1}^{n_1}\mathrm{R}^g(X_1, \alpha_i, \alpha_i, X_2)\overset{\eqref{Riem1112}}{=}&e^{-2f_1}\sum_{i=1}^{n_1}g(X_1, \alpha_i)[\alpha_i(X_2(f_1))-\alpha_i(f_2)\cdot X_2(f_1)]\\
		&-e^{-2f_1}\sum_{i=1}^{n_1}g(\alpha_i, \alpha_i)[X_1(X_2(f_1))-X_1(f_2)\cdot X_2(f_1)]\\
		=&X_1(X_2(f_1))-X_1(f_2)\cdot X_2(f_1)-n_1 X_1(X_2(f_1)) +n_1X_1(f_2)\cdot X_2(f_1)\\
		=&(1-n_1)[X_1(X_2(f_1))-X_1(f_2)\cdot X_2(f_1)].
	\end{split}
\end{equation*}	
The second term  in \eqref{eqric} is computed similarly:
\begin{equation*}
	\begin{split}
		e^{-2f_2}\sum_{j=1}^{n_2}\mathrm{R}^g(X_1, \beta_j, \beta_j, X_2)=&e^{-2f_2}\sum_{j=1}^{n_2}\mathrm{R}^g(X_2, \beta_j, \beta_j, X_1)\\
		\overset{\eqref{Riem2221}}{=}&e^{-2f_2}\sum_{j=1}^{n_2} g(X_2, \beta_j)[\beta_j(X_1(f_2))-\beta_j(f_1)\cdot X_1(f_2)]\\
		&-e^{-2f_2}\sum_{j=1}^{n_2} g(\beta_j,\beta_j)[X_2(X_1(f_2))-X_2(f_1)\cdot X_1(f_2)]\\
		=&X_2(X_1(f_2))-X_1(f_2)\cdot X_2(f_1)-n_2X_2(X_1(f_2))+n_2X_1(f_2)\cdot X_2(f_1)\\
		=&(1-n_2)[X_2(X_1(f_2))-X_1(f_2)\cdot X_2(f_1)].
	\end{split}
\end{equation*}	
Replacing these two formulas in \eqref{eqric} we obtain \eqref{Ric12}.

Considering the same local orthormal bases as above, namely a local $g_1$-orthonormal basis $\{\alpha_i\}_{1\leq i\leq n_1}$ on $M_1$ and a local $g_2$-orthonormal $\{\beta_j\}_{1\leq j\leq n_2}$ on $M_2$,  we write:
\begin{equation}\label{eqric11}
	\mathrm{Ric}^g(X_1,X_1)=e^{-2f_1}\sum_{i=1}^{n_1}\mathrm{R}^g(X_1, \alpha_i, \alpha_i, X_1)+e^{-2f_2}\sum_{j=1}^{n_2}\mathrm{R}^g(X_1, \beta_j, \beta_j, X_1),
\end{equation}	
where the first term on the right-hand side of \eqref{eqric11} is computed as follows:
\begin{equation*}
	\begin{split}
		&e^{-2f_1}\sum_{i=1}^{n_1}\mathrm{R}^g(X_1, \alpha_i, \alpha_i, X_1)\overset{\eqref{Riem1111}}{=}\mathrm{Ric}^{g_1}(X_1,X_1)\\
		&\phantom{xx}+e^{-2f_1}\sum_{i=1}^{n_1}[2X_1(\alpha_i(f_1))g(X_1,\alpha_i)-2X_1(f_1)\cdot \alpha_i(f_1)\,g(X_1, \alpha_i)]\\
		&\phantom{xx}- \sum_{i=1}^{n_1}[X_1(X_1(f_1))\, |\alpha_i|^2_g+\alpha_i(\alpha_i(f_1))\, |X_1|^2_g-(X_1(f_1))^2|\alpha_i|^2_g-(\alpha_i(f_1))^2|X_1|^2_g]\\
		&\phantom{xx}-\sum_{i=1}^{n_1}[|\di f_1|^2_g|X_1|^2_g|\alpha_i|^2_g
		-(g(X_1, \alpha_i))^2|\di f_1|^2_g]\\
		&=\mathrm{Ric}^{g_1}(X_1,X_1)+e^{-2f_1}\sum_{i=1}^{n_1}[2X_1(\alpha_i(f_1))g(X_1,\alpha_i)-2X_1(f_1)\cdot \alpha_i(f_1)\,g(X_1, \alpha_i)]\\
		&\phantom{xx}- e^{-2f_1}\sum_{i=1}^{n_1}[X_1(X_1(f_1))\, |\alpha_i|^2_g+\alpha_i(\alpha_i(f_1))\, |X_1|^2_g-(X_1(f_1))^2|\alpha_i|^2_g-(\alpha_i(f_1))^2|X_1|^2_g]\\
		&\phantom{xx}-e^{-2f_1}\sum_{i=1}^{n_1}[|\di f_1|^2_g|X_1|^2_g|\alpha_i|^2_g
		-(g(X_1, \alpha_i))^2|\di f_1|^2_g]\\
		&=	\mathrm{Ric}^{g_1}(X_1,X_1)+(n_1-2)(X_1(f_1))^2-(n_1-2)X_1(X_1(f_1))+e^{-2f_1}\Delta_1 f_1\cdot |X_1|^2_g\\
		&\phantom{xx}+(2-n_1)|\di_1 f_1|^2_{g}|X_1|^2_g
	+(1-n_1)|\di_2 f_1|^2_g|X_1|^2_g.
	 \end{split}
\end{equation*}	
Similarly, the second term on the right-hand side of  \eqref{eqric11} is computed as follows:
\begin{equation*}
	\begin{split}
		e^{-2f_2}\sum_{j=1}^{n_2}\mathrm{R}^g(X_1, \beta_j, \beta_j, X_1)\overset{\eqref{Riem1221}}{=}&-n_2[ (X_1(f_2))^2  +X_1(X_1(f_2))-2X_1(f_1)\cdot X_1(f_2)]\\
		&+[(2-n_2) g(\di_2 f_1, \di_2 f_2)-n_2g(\di_1 f_1, \di_1 f_2)]\,|X_1|_g^2\\
		&-[|\di_2 f_1|^2_g+e^{-2f_2}\Delta_2 f_1]\,|X_1|_g^2.
	\end{split}
\end{equation*}	
	Altogether, replacing the last two formulas in \eqref{eqric11}, we obtain \eqref{Ric11}, which finishes the proof of the lemma.
\end{proof}

By symmetry, permuting the indexes in \eqref{Ric11}, we obtain  the analogous formula:
\begin{equation}\label{Ric22}
	\begin{split}
		\mathrm{Ric}^g(X_2,X_2)&=\mathrm{Ric}^{g_2}(X_2,X_2)+ ( e^{-2f_1}\Delta_1f_2+e^{-2f_2}\Delta_2f_2)|X_2|_g^2+(2-n_1)g(\di_1 f_1, \di_1 f_2)|X_2|_g^2\\
		&\phantom{=}-[n_1\, g(\di_1f_2, \di_2 f_1)+n_2|\di_1f_2|_g^2-(2-n_2)|\di_2 f_2|_g^2]|X_2|_g^2\\
		&\phantom{=}+(2-n_2)[\mathrm{Hess}^{g_2}(f_2)(X_2, X_2)-(X_2(f_2))^2]\\
		&\phantom{=}-n_1[\mathrm{Hess}^{g_2}(f_1)(X_2, X_2)+(X_2(f_1))^2-2X_2(f_2)\cdot X_2(f_1)].
	\end{split}
\end{equation}	

	\section{Main Result}
	In this section we give the proof  of Theorem~\ref{mainthm}. Let us start with the following equivalent characterization of the geometric assumption in Theorem~\ref{mainthm} about the Lee form of the adapted Weyl connection:
	
	\begin{lemma}\label{equivlemma}
		Let $(M_1,c_1)$ and $(M_2,c_2)$ be two compact conformal manifolds and let $c$ be a conformal product structure on $M_1\times M_2$, with adapted Weyl connection $\nabla$. A metric $g$ in the conformal class $c$ has the property that the restriction to $\T M_2$ of the Lee form of $\nabla$ with respect to $g$ is $\nabla$-parallel in the direction of $\T M_1$ if and only if there exist functions $f_1, f_2\in\mathcal{C}^\infty(M)$ satisfying $\di_1f_1=0$ and metrics $g_i\in c_i$, such that $g=e^{2f_1}g_1+e^{2f_2}g_2$.
	\end{lemma}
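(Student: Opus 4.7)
\medskip\noindent\emph{Proof plan.} The strategy is to pick any presentation $g = e^{2f_1}g_1+e^{2f_2}g_2$ with $g_i \in c_i$ and $f_i \in \mathcal{C}^\infty(M)$ and unpack the parallelism hypothesis into a scalar PDE on $f_1$. By the formula quoted from \cite{bm2011} just before \eqref{nabla12}, the Lee form of $\nabla$ with respect to such a $g$ is $\theta^g = -\di_1 f_2 - \di_2 f_1$, so its restriction to $\T M_2$ — naturally viewed as a 1-form $\omega$ on $M$ that vanishes on $\T M_1$ — is $\omega = -\di_2 f_1$.

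Next I would compute $(\nabla_{X_1}\omega)(X_2)$ for $X_1 \in \mathcal{C}^\infty(\T M_1)$ and $X_2 \in \mathcal{C}^\infty(\T M_2)$. Since $\nabla$ preserves the splitting $\T M = \T M_1 \oplus \T M_2$, the 1-form $\nabla_{X_1}\omega$ still vanishes on $\T M_1$, so it suffices to test it against vectors tangent to $M_2$; and because \eqref{nabla12} gives $\nabla_{X_1}X_2 = 0$, the computation collapses to
\[ (\nabla_{X_1}\omega)(X_2) = X_1(\omega(X_2)) - \omega(\nabla_{X_1}X_2) = -X_1(X_2(f_1)). \]
Thus the geometric hypothesis on $g$ is equivalent to the scalar equation $\di_1 \di_2 f_1 = 0$.

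At this point Lemma~\ref{lemmasum} applies immediately and produces a decomposition $f_1 = a_1 + a_2$ with $a_1 \in \mathcal{C}^\infty(M_1)$ and $a_2 \in \mathcal{C}^\infty(M_2)$. I then absorb the $M_1$-dependent factor into the base metric: setting $h_1 := e^{2a_1}g_1 \in c_1$ and $f_1' := a_2$ gives $g = e^{2f_1'}h_1 + e^{2f_2}g_2$ with $\di_1 f_1' = 0$, which is the required form. The converse is painless: if $\di_1 f_1 = 0$ then $\di_2\di_1 f_1 = 0$, and the anti-commutation relation from \eqref{dd} yields $\di_1\di_2 f_1 = 0$, so by the computation above $\omega$ is $\nabla$-parallel in the $\T M_1$-direction.

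The only place where any thinking is required is the reduction to $\di_1\di_2 f_1 = 0$: once one notices that \eqref{nabla12} kills the covariant derivative of $X_2$, there is no real obstacle, and Lemma~\ref{lemmasum} finishes the argument.
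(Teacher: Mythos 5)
Your proposal is correct and follows essentially the same route as the paper: it reduces the parallelism hypothesis to $\di_1\di_2 f_1=0$ via the Lee form $\theta^g=-\di_1 f_2-\di_2 f_1$ and the identity $\nabla_{X_1}X_2=0$ from \eqref{nabla12}, then applies Lemma~\ref{lemmasum} and absorbs $e^{2a_1}$ into $g_1$. Your explicit treatment of the converse and the remark that $\nabla_{X_1}\omega$ automatically vanishes on $\T M_1$ are fine details the paper leaves implicit, but the argument is the same.
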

	
	\begin{proof}
		Let $g$ be a metric in $c$. Then there  exist metrics $g_i$ on $M_i$ and functions $f_1$, $f_2$ on~$M$, such that $g=e^{2f_1}g_1+e^{2f_2}g_2$. Since the Lee form of the Weyl connection $\nabla$ with respect to~$g$ is given by $\theta^g=-\di_1 f_2-\di_2 f_1$, it follows that its restriction to $\T M_2$ is $\theta^g_2=-\di_2 f_1$. We compute for all vector fields $X_i\in\mathcal{C}^\infty(\T M_i)$:
		\begin{equation*}
			\begin{split}
				(\nabla_{X_1}\theta^g_2)(X_2)&=-(\nabla_{X_1}\di_2 f_1)(X_2)=-X_1(X_2(f_1))+\di_2 f_1(\nabla_{X_1} X_2)\overset{\eqref{nabla12}}{=}-\di_1\di_2 f_1(X_1, X_2).
			\end{split}	
		\end{equation*}
		This equation shows that $\theta^g_2$ satisfies $\nabla_{X_1}\theta^g_2=0$ for all $X_1\in\mathcal{C}^\infty(\T M_1)$ if and only if \mbox{$\di_1\di_2 f_1=0$}. By Lemma~\ref{lemmasum} applied to $f_1$, there exist functions $a_i\in\mathcal{C}^\infty(M_i)$, such that $f_1=a_1+a_2$. If we replace $f_1$ by $a_2$ and the metric $g_1$ by $e^{2a_1}g_1$, then we may assume that $\di_1 f_1=0$, which finishes the proof of the lemma.
	\end{proof}

    Let now $g$ be an Einstein metric on $M_1\times M_2$ satisfying the assumption of Theorem~\ref{mainthm}. According to Lemma~\ref{equivlemma}, the  metric $g$ can be written as $g=e^{2f_1}g_1+e^{2f_2}g_2$, where $f_1$ satisfies $\di_1f_1=0$. Under these assumptions, the formulas for the Ricci curvature of $g$ become much simpler. First, because the metric $g$ is Einstein, the left-hand side of \eqref{Ric12} vanishes. On the other hand,  $\di_1 f_1=0$ implies that the term $X_1(X_2(f_1))$ on the right-hand side of \eqref{Ric12} also vanishes. Thus, \eqref{Ric12} yields the following equality:
	\begin{equation}\label{Einstein1}
		(n-2)X_1(f_2)\cdot X_2(f_1)=(n_2-1)X_1(X_2(f_2)), 
	\end{equation}
for all vector fields $X_i\in\mathcal{C}^\infty(\mathrm{T}M_i)$.
	Let $\lambda\in\mathbb{R}$ be the Einstein constant of $g$, \emph{i.e.} $\Ric^g=\lambda g$. Under the assumption that $\di_1 f_1=0$, Equation \eqref{Ric11} reads:
	\begin{equation}\label{Ric11s}
		\begin{split}
			\lambda|X_1|^2_g&=\mathrm{Ric}^{g_1}(X_1,X_1)+|X_1|^2_g[ e^{-2f_2}\Delta_2f_1+(2-n_2)g(\di_2 f_1, \di_2 f_2)-n_1|\di_2 f_1|^2_g]\\
			&\phantom{=}-n_2(\mathrm{Hess}^{g_1}(f_2)(X_1, X_1)+X_1(f_2)^2), 
		\end{split}
	\end{equation}	
	for all vector fields $X_i\in\mathcal{C}^\infty(\mathrm{T}M_i)$.
	
	The key argument for the proof of  Theorem ~\ref{mainthm} is the following:
	\begin{lemma}\label{keylemma} The function $f_2$ satisfies $\di_1\di_2 f_2=0$.
	\end{lemma}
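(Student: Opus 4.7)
The plan is to exploit \eqref{Einstein1} as a first order ODE along each $M_2$-leaf, integrate it explicitly to obtain a structural formula for $f_2$, and then feed this formula into \eqref{Ric11s} and isolate its $g_1$-trace-free part. Concretely, under the assumption $\di_1 f_1=0$, equation \eqref{Einstein1} reads $X_2(X_1 f_2)=c\,(X_1 f_2)(X_2 f_1)$ with $c:=(n-2)/(n_2-1)$ (we focus first on the generic regime $n_1,n_2\geq 2$). Hence for every $X_1\in\mathcal{C}^\infty(\mathrm{T}M_1)$ the function $e^{-cf_1}X_1 f_2$ is $X_2$-invariant, so by connectedness of $M_2$ there exists a $1$-form $\alpha$ on $M_1$ with $\di_1 f_2=e^{cf_1}\pi_1^\ast\alpha$. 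Fixing any $x_2^\ast\in M_2$ and dividing by the positive constant $e^{cf_1(x_2^\ast)}$ realises $\alpha$ as the $M_1$-differential of the smooth function $x_1\mapsto e^{-cf_1(x_2^\ast)}f_2(x_1,x_2^\ast)$, i.e.\ $\alpha=\di^{M_1}\beta$ for some $\beta\in\mathcal{C}^\infty(M_1)$. A primitive computation then yields the global factorisation
\[
f_2(x_1,x_2)=\beta(x_1)\,e^{cf_1(x_2)}+\gamma(x_2)
\]
for some $\gamma\in\mathcal{C}^\infty(M_2)$.

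With this in hand the conclusion $\di_1\di_2 f_2=0$ becomes equivalent to $\di^{M_1}\beta\cdot\di^{M_2}f_1\equiv 0$, so, discarding the trivial case in which $f_1$ is constant, it suffices to show that $\beta$ is constant. Substituting the factorisation into \eqref{Ric11s} gives $\mathrm{Hess}^{g_1}(f_2)=e^{cf_1}\mathrm{Hess}^{g_1}\beta$ and $(X_1 f_2)(Y_1 f_2)=e^{2cf_1}\di\beta(X_1)\di\beta(Y_1)$; since the polarisation of the right-hand side of \eqref{Ric11s} in $(X_1,Y_1)$ is a scalar multiple of $g_1(X_1,Y_1)$, its $g_1$-trace-free part vanishes, so I obtain
\[
[\Ric^{g_1}]_0\;-\;n_2\,e^{cf_1}\,[\mathrm{Hess}^{g_1}\beta]_0\;-\;n_2\,e^{2cf_1}\,[\di\beta\otimes\di\beta]_0\;=\;0
\]
pointwise on $M_1\times M_2$, where $[\,\cdot\,]_0$ denotes the $g_1$-trace-free part.

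The key observation is that under the non-constancy of $f_1$ the three functions $1$, $e^{cf_1}$, $e^{2cf_1}$ of $x_2\in M_2$ are linearly independent (Vandermonde), whereas the three trace-free coefficient tensors depend only on $x_1$; hence each of them vanishes identically. In particular $\di\beta\otimes\di\beta=\tfrac{|\di\beta|_{g_1}^2}{n_1}\,g_1$, but the left-hand side has rank at most one while $g_1$ has rank $n_1\geq 2$, which forces $\di^{M_1}\beta\equiv 0$. Hence $\beta$ is constant and the lemma is proved in the generic case.

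The main obstacle is the low-dimensional boundary cases $n_1=1$ and $n_2=1$. If $n_2=1$ then $c$ is ill-defined and the ODE integration above collapses; instead \eqref{Einstein1} gives directly the pointwise alternative $(X_1 f_2)(X_2 f_1)=0$, which I would have to upgrade to a global statement using \eqref{Ric11s} and the compactness and connectedness of $M$. If $n_1=1$ then the rank comparison in the final step breaks down, since a non-zero tensor of the form $\di\beta\otimes\di\beta$ is automatically a multiple of $g_1$; here I would bring in the symmetric formula \eqref{Ric22} on $\mathrm{T}M_2$, whose $g_2$-trace-free part contributes additional constraints on $\gamma$ and $f_1$, and combine these with the vanishing of $[\di\beta\otimes\di\beta]_0$ to still conclude $\di^{M_1}\beta\equiv 0$.
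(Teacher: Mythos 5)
In the regime $n_1\ge 2$, $n_2\ge 2$ your argument is correct and takes a genuinely different route from the paper. Integrating \eqref{Einstein1} along the $M_2$-directions to obtain the global factorisation $f_2=\beta\, e^{cf_1}+\gamma$ with $\beta\in\mathcal{C}^\infty(M_1)$, $\gamma\in\mathcal{C}^\infty(M_2)$ and $c=\tfrac{n-2}{n_2-1}>0$ is a structural step the paper does not take; the paper instead differentiates the Hessian identity \eqref{eqHess} twice in the $M_2$-direction on the set where $\di_2f_1\neq 0$ and subtracts, arriving at $\di_1f_2\otimes\di_1f_2=\varphi_3\, g_1$. Your version — taking the $g_1$-trace-free part of \eqref{Ric11s}, noting that the three coefficient tensors depend only on $x_1$ while $1,e^{cf_1},e^{2cf_1}$ depend only on $x_2$ and are linearly independent when $f_1$ is non-constant, and then using that a rank-one tensor cannot be a nonzero multiple of $g_1$ for $n_1\ge 2$ — reaches the same conclusion more transparently, avoids the case split between $C_2$ and its complement, and yields $[\mathrm{Ric}^{g_1}]_0=0$ as a by-product. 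So Case 1 of the paper is fully covered by your proposal.

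The gap is in the boundary cases, which are not optional since the lemma must hold whenever $\dim M\ge 3$, and which you only sketch. For $n_2=1$ the pointwise alternative $(X_1f_2)\cdot(X_2f_1)=0$ gives $M=C_1\cup C_2$, but upgrading this to $\di_1\di_2f_2=0$ is precisely where the work lies: in the paper one derives \eqref{lambda} on the interior of $C_2$, forces $\lambda=0$ either by evaluating at the extrema of $f_2$ or at a point where the closures of the interiors of $C_1$ and $C_2$ meet, and then integrates $\Delta_1f_2=|\di_1f_2|^2_{g_1}$ over the $M_1$-slices to conclude $\di_1f_2=0$; ``compactness and connectedness'' names the right tools but you have not run this argument. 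The case $n_1=1$ is more serious: there the $g_1$-trace-free part of \eqref{Ric11s} is vacuous, your factorisation only records $f_2=\beta(t)e^{f_1}+\gamma$ (equivalently \eqref{kderiv22f2}), and the proposal to use the $g_2$-trace-free part of \eqref{Ric22} is neither carried out nor plausibly sufficient on its own: in the paper this case requires differentiating the Einstein equation three more times in $t$, the homogeneity argument of Lemma \ref{homogen} to separate terms of different degrees (giving $f_2''=0$ on $U$, then $\lambda<0$, then $n_2=2$), and finally a global integration argument on the $M_2$-leaves producing a contradiction with $U\neq\emptyset$. Until these two cases are actually proved, the lemma is established only for $n_1,n_2\ge 2$.
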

	
\begin{proof} We introduce the following closed subsets of $M$:
	$$C_1:=\{x\in M\, |\, (\di_1 f_2)_x=0\}\quad \text{and} \quad C_2:=\{x\in M\, |\, (\di_2 f_1)_x=0\}.$$

	The proof of Lemma \ref{keylemma} will be split into three cases, according to the possible dimensions of the factors $M_1$ and $M_2$.
		
		{\bf Case 1.} In this first case we assume that the dimension of each factor is at least $2$, \emph{i.e.} $n_1\geq 2$ and $n_2\geq 2$. Then, Equality \eqref{Einstein1}  reads
		\begin{equation}\label{x1x2}
			X_1(X_2(f_2))=\frac{n-2}{n_2-1} X_1(f_2)\cdot X_2(f_1),\qquad \forall X_i\in\mathcal{C}^\infty(\mathrm{T}M_i).
		\end{equation}
		By definition, $\di_2 f_1=0$ on $C_2$. Hence, Equation \eqref{x1x2} implies that $\di_1\di_2 f_2=0$ at all points of~$C_2$. If $M=C_2$ we are done, so we assume for the remaining part of the argument that the open set $M\setminus C_2$ is non-empty. Equation \eqref{Ric11s} can be equivalently written
		\begin{equation}\label{eqHess}
		\mathrm{Hess}^{g_1}(f_2)(X_1, X_1)+(X_1(f_2))^2=\frac{1}{n_2}\mathrm{Ric}^{g_1}(X_1,X_1)+\varphi |X_1|_{g_1}^2,
		\end{equation}
		where we denote by $\varphi$ the function $\displaystyle \varphi:=\frac{ 1}{n_2} e^{2f_1}(e^{-2f_2}\Delta_2f_1+(2-n_2)(\di_2 f_1, \di_2 f_2)-n_1|\di_2 f_1|^2_g-\lambda)$. Differentiating \eqref{eqHess} in the direction of $X_2$ and choosing $X_1$ to be $\nabla^{g_1}$-parallel at the point where the computation is done, we obtain:
		\begin{equation*}
			\begin{split}
				|X_1|_{g_1}^2 X_2(\varphi )&=2X_2(X_1(f_2))\cdot X_1(f_2)+\mathrm{Hess}^{g_1}(X_2(f_2))(X_1, X_1)\\
				&\overset{\eqref{x1x2}}{=} \frac{2(n-2)}{n_2-1}(X_1(f_2))^2\cdot X_2(f_1)+X_1\left(\frac{n-2}{n_2-1}X_1(f_2)\cdot X_2(f_1) \right)\\
				&=\frac{n-2}{n_2-1}\left[2(X_1(f_2))^2 +\mathrm{Hess}^{g_1}(f_2)(X_1, X_1) \right]\cdot X_2(f_1).
			\end{split}	
		\end{equation*}	
		By tensoriality, this equation holds for all vector fields $X_i\in\mathcal{C}^\infty(\mathrm{T}M_i)$.
Let $x\in M\setminus C_2$ be an arbitrary point. By definition, there exists $X_2\in\mathcal{C}^\infty(\mathrm{T}M_2)$ such that $X_2(f_1)\ne 0$ on some neighborhood $V_x$ of $x$. 
		Restricting the above equality to $V_x$, we can write
		$$2(X_1(f_2))^2 +\mathrm{Hess}^{g_1}(f_2)(X_1, X_1) =|X_1|_{g_1}^2 \varphi_1,$$
		where $\displaystyle \varphi_1:=\frac{n_2-1}{n-2}\cdot \frac{X_2(\varphi )}{X_2(f_1)}$, which is well-defined on $V_x$. Differentiating this last equation again in the direction of $X_2\in \mathcal{C}^\infty(\mathrm{T} M_2)$, a similar computation using \eqref{x1x2} shows that
		$$4(X_1(f_2))^2 +\mathrm{Hess}^{g_1}(f_2)(X_1, X_1) =|X_1|_{g_1}^2 \varphi_2,$$
		where $\displaystyle\varphi_2:=\frac{n_2-1}{n-2}\cdot \frac{X_2(\varphi_1)}{X_2(f_1)}$. The difference of the last two identities reads
		$$(X_1(f_2))^2 = |X_1|_{g_1}^2 \varphi_3,$$
		where $\varphi_3:=\frac{1}{2}(\varphi_2-\varphi_1)$ on $V_x$. Since this equality holds for all vector fields $X_1\in\mathcal{C}^{\infty}(\mathrm{T}M_1)$, we obtain that $\di_1 f_2\otimes \di_1 f_2=\varphi_3 g_1$ on $V_x$. Since $n_1\geq 2$ and $V_x$ is non-empty, it follows that $\varphi_3=0$ and thus $\di_1 f_2=0$ on the open set $V_x$, thus on the whole $M\setminus C_2$ since $x$ was arbitrary. By \eqref{Einstein1} it follows that $\di_1\di_2 f_2=0$ on $M\setminus C_2$. But we have already noticed that $\di_1\di_2 f_2=0$ on $C_2$, so finally $\di_1\di_2 f_2=0$ on $M$.
		
		\bigskip
		
		{\bf Case 2.} Assume that $n_2=1$ and $n_1>1$. Since $X_1(f_1)=0$ for all $X_1\in \mathcal{C}^\infty(\mathrm{T} M_1)$, Equality \eqref{Ric12} yields that
		$$X_1(f_2)\cdot X_2(f_1)=0, $$
		for all vector fields $X_i\in\mathcal{C}^\infty(\mathrm{T}M_i)$ which are $\nabla^{g_i}$-parallel at the point where the computation is done.
		Thus in this case we have $M=C_1\cup C_2$, which in particular implies that the union of the interiors $\overset{\circ}{C_1}\cup  \overset{\circ}{C_2}$ is a dense subset in $M$. 
		
		We claim that  $\di_2\di_1 f_2=0$ at each point of $\overset{\circ}{C_1}\cup  \overset{\circ}{C_2}$. At points of $\overset{\circ}{C_1}$, this follows directly by differentiating the relation $\di_1f_2=0$ which holds on the open set $\overset{\circ}{C_1}$. 
		
		On the other hand, since $\di_2f_1|_{ \overset{\circ}{C_2}}=0$ and $\di_1f_1=0$ by assumption, it follows that $f_1$ is locally constant on $ \overset{\circ}{C_2}$. Thus, on $ \overset{\circ}{C_2}$, Equation \eqref{Ric11s} simplifies to:
		\begin{equation}
			\begin{split}
				\lambda|X_1|^2_g&=\mathrm{Ric}^{g_1}(X_1,X_1)-(\mathrm{Hess}^{g_1}(f_2)(X_1, X_1)+X_1(f_2)^2).
			\end{split}
		\end{equation}	
		
		Substituting $X_2=\frac{\partial}{\partial t}$ in \eqref{Ric22}, where $t$ denotes the arc length coordinate on $(M_2,g_2)$ and denoting by $f', f''$ the derivatives of a function $f$ with respect to $t$, we obtain on $ \overset{\circ}{C_2}$:
		$$\lambda e^{2f_2}=e^{2f_2}[e^{-2f_1}\Delta_1f_2-e^{-2f_2}f''_2-|\di_1f_2|^2_g+e^{-2f_2} (f'_2)^2]+f_2''-(f'_2)^2,$$
		or, equivalently:
		\begin{equation}\label{lambda}
			\lambda=e^{-2f_1}\Delta_1f_2-|\di_1f_2|^2_g=e^{-2f_1}(\Delta_1f_2-|\di_1f_2|_{g_1}^2).
		\end{equation}
		We now show that $\di_1\di_2 f_2=0$ on $\overset{\circ}{C_2}$, by considering the following subcases:
		\begin{enumerate}
			\item[a)] If $\overset{\circ}{C_1}=\emptyset$, then $\overset{\circ}{C_2}$ is dense in $M$, so \eqref{lambda} is satisfied on $M$. Evaluating \eqref{lambda} at a point of $M$ where $f_2$ attains its global maximum yields $\lambda \ge 0$, and at a point where $f_2$ attains its global minimum we obtain $\lambda\le 0$. Consequently $\lambda=0$, whence $\Delta_1f_2=|\di_1f_2|_{g_1}^2$ on $M$. Integrating on each slice $M_1\times \{x_2\}$ yields $\di_1f_2=0$ on $M$. In particular, also $\di_1\di_2 f_2=0$.
			\item[b)] If $\overset{\circ}{C_2}=\emptyset$, then $\overset{\circ}{C_1}$ is dense in $M$, so $\di_1 f_2=0$ on $M$. In particular, also in this case it follows that $\di_1\di_2 f_2=0$. 
			\item[c)] If $\overset{\circ}{C_1}\neq \emptyset$ and $\overset{\circ}{C_2}\neq \emptyset$, then we evaluate Equation \eqref{lambda} at a point from the intersection of the closures of $\overset{\circ}{C_1}$ and $\overset{\circ}{C_2}$, which is not empty (because the union of these closures is $M$ which is connected), yields $\lambda=0$. Hence, \eqref{lambda} further implies that $\Delta_1f_2=|\di_1f_2|_{g_1}^2$ on $\overset{\circ}{C_2}$.  Since this equality holds by definition on $\overset{\circ}{C_1}$, it then follows by density that $\Delta_1f_2=|\di_1f_2|_{g_1}^2$ on $M$. We conclude like in case a) that $\di_1\di_2 f_2=0$ on $M$, by integration on the slices $M_1\times \{x_2\}$.
		\end{enumerate}

		{\bf Case 3.} Assume that $n_1=1$ and $n_2> 1$.  Equation \eqref{Einstein1} reads:
		\begin{equation}\label{deriv22f2}
			\di_1\di_2f_2=\di_1f_2\wedge \di_2f_1.
		\end{equation}
		Hence, $\di_1\di_2 f_2=0$ on $C_1\cup C_2$.
		We denote by $U$ the open set $U:=M\setminus(C_1\cup C_2)$. If $U=\emptyset$, then the equality $\di_1\di_2 f_2=0$ holds on $M$ and we are done.
		Assume for the rest of the proof that $U\neq\emptyset$. Denoting by  $t$ the arc length coordinate on $(M_1,g_1)$, we remark that, by definition, $f'_2$ does not vanish at any point of $U$. Equation \eqref{Ric11} implies that 
		$$ \lambda e^{2f_1}=e^{2f_1}[e^{-2f_2}\Delta_2 f_1+(2-n_2)g(\di_2 f_2, \di_2 f_1)-|\di_2f_1|^2_g]-n_2(f''_2+(f'_2)^2),$$
		or, equivalently,
		\begin{equation}\label{Ric11sp}
			\lambda e^{2f_2}=\Delta_2 f_1+(2-n_2)g_2(\di_2 f_2, \di_2 f_1)-|\di_2f_1|_{g_2}^2-n_2e^{2f_2-2f_1}(f''_2+(f'_2)^2).
		\end{equation}
		Differentiating this equality with respect to $t$ yields:
		$$2 \lambda f'_2 e^{2f_2}=(2-n_2)g_2(\di_2 f'_2, \di_2 f_1)-n_2e^{2f_2-2f_1}[2f'_2(f''_2+(f'_2)^2)+f^{(3)}_2+2f'_2f''_2].$$
		From \eqref{deriv22f2} and $\di_1 f_1=0$, it follows that $\di_2 f'_2=f'_2\di_2 f_1$, so we have $$g_2(\di_2 f'_2, \di_2 f_1)=g_2(f'_2\di_2f_1, \di_2f_1)=f_2'|\di_2f_1|^2_{g_2},$$
		which replaced in the above equality yields
		\begin{equation}\label{f2deriv}
			2 \lambda f'_2 e^{2f_2}=(2-n_2)f_2'|\di_2f_1|_{g_2}^2-n_2e^{2f_2-2f_1}[4f'_2f''_2+2(f'_2)^3+f^{(3)}_2].
		\end{equation}
		Dividing this equality by $f'_2$, which does not vanish on $U$, and then differentiating again with respect to $t$, we obtain:
		\begin{equation*}
			4 \lambda f'_2 e^{2f_2}=-n_2e^{2f_2-2f_1}[12f'_2f''_2+4(f'_2)^3+6f^{(3)}_2+f_2^{(4)}(f_2')^{-1}- (f_2')^{-2}f_2''f_2^{(3)}],
		\end{equation*}
		or, equivalently, after dividing by $-n_2f'_2e^{2f_2-2f_1}$:
		\begin{equation}\label{eqlambda}
			-\frac{4 \lambda e^{2f_1}}{n_2}=12f''_2+4(f'_2)^2+\frac{6f^{(3)}_2}{f'_2}+\frac{f_2^{(4)}}{(f'_2)^2}- \frac{f_2''f_2^{(3)}}{(f'_2)^3}.
		\end{equation}
		Differentiating this equality once more with respect to $t$ yields:
		\begin{equation*}
			0=\underbrace{12f^{(3)}_2}_{=:A_1}+\underbrace{8f'_2f_2''}_{=:A_2}+\underbrace{\frac{6f^{(4)}_2}{f'_2}-\frac{6f''_2f^{(3)}_2}{(f'_2)^2}}_{=:A_0}+\underbrace{\frac{f_2^{(5)}}{(f'_2)^2}-\frac{2f''_2f_2^{(4)}}{(f'_2)^3}- \frac{(f_2^{(3)})^2+f''_2f_2^{(4)}}{(f'_2)^3}+\frac{3(f''_2)^2f_2^{(3)}}{(f'_2)^4}}_{=:A_{-1}}.
		\end{equation*}
		
		We have introduced the above notation  $A_\ell$, for $\ell\in\{-1,0,1,2\}$ motivated by the fact that each $A_\ell$ is a rational fraction of degree $\ell$ in the derivatives $f_2^{(k)}$ of $f_2$ with respect to $t$. Notice that Equality \eqref{deriv22f2} together with the fact that $\di_1f_1=0$ yields
		\begin{equation}\label{kderiv22f2}
			\di_2f^{(k)}_2=f^{(k)}_2\di_2f_1, \quad \text{ for all } k\geq 1.
		\end{equation}
		 The following general lemma will be applied to the above defined functions $A_\ell$, for $\ell\in\{-1,0,1,2\}$.
		\begin{lemma}\label{homogen}
			Let $f_1$ and $f_2$ be two functions on $M$, satisfying \eqref{kderiv22f2}. 
			\begin{enumerate}
				\item[a)] For any homogeneous polynomial $P$ of $s\ge 1$ variables and of degree $p\ge 0$, and for every positive integers $k_1,\ldots,k_s$, the following relation holds:
				$$\di_2 P(f_2^{(k_1)}, \cdots ,f_2^{(k_s)})=p\cdot P(f_2^{(k_1)}, \cdots ,f_2^{(k_s)})\cdot \di_2 f_1.$$
				\item[b)] For any two homogeneous polynomials $P$ and $Q$ of $s\ge 1$ variables and of degree $p$, respectively $q$, and for every positive integers $k_1,\ldots,k_s$, the following relation holds:
				$$\di_2 \left(\frac{P}{Q}(f_2^{(k_1)}, \cdots ,f_2^{(k_s)})\right)=(p-q)\cdot \frac{P}{Q}(f_2^{(k_1)}, \cdots ,f_2^{(k_s)})\cdot \di_2 f_1.$$
			\end{enumerate}	
		\end{lemma}	
		\begin{proof} a) Follows directly by \eqref{kderiv22f2} applied to each monomial of $P$. 
		
		b) Follows from a) applied to $P$ and $Q$.
		\end{proof}
		
		Applying Lemma \ref{homogen} to $A_\ell$ yields $\di_2A_\ell=\ell A_\ell\cdot \di_2 f_1$, for all $\ell\in\{-1,0,1,2\}$. Thus, applying $\di_2$ to the equality $A_1+A_2+A_0+A_{-1}=0$ implies that $A_1+2A_2-A_{-1}=0$, since $\di_2 f_1$ does not vanish on $U$. Applying again  $\di_2$ to this relation yields $A_1+4A_2+A_{-1}=0$. Repeating the same argument once again yields $A_1+8A_2-A_{-1}=0$. This last equality together with the initial equality $A_1+A_2+A_0+A_{-1}=0$ implies that $A_2=0$, \emph{i.e.} $f'_2f''_2=0$, which means that $f''_2=0$ on $U$, because $f'_2$ does not vanish on $U$. Replacing $f''_2=0$ in \eqref{eqlambda} we obtain the following equality on $U$:
		\begin{equation}\label{f12}
			(f'_2)^2=-\frac{\lambda e^{2f_1}}{n_2},
		\end{equation}
		which implies in particular that $\lambda<0$. Replacing now $f''_2=0$ in \eqref{f2deriv} yields the following equality on $U$:
		\begin{equation*}
			2 \lambda e^{2f_2}=(2-n_2)|\di_2f_1|_{g_2}^2-2n_2e^{2f_2-2f_1}(f'_2)^2.
		\end{equation*} The last two equalities imply that
		\begin{equation*}
			2 \lambda e^{2f_2}=(2-n_2)|\di_2f_1|_{g_2}^2+2n_2e^{2f_2-2f_1}\cdot \frac{\lambda e^{2f_1}}{n_2},
		\end{equation*}
		so $(2-n_2)|\di_2f_1|_{g_2}^2=0$. Since $\di_2f_1\neq 0$ on $U$, it follows that $n_2=2$. Replacing $f''_2=0$ and $n_2=2$ in \eqref{Ric11sp} yields 
		$$\lambda e^{2f_2}=\Delta_2 f_1-|\di_2f_1|_{g_2}^2-n_2e^{2f_2-2f_1}(f'_2)^2=\Delta_2 f_1-|\di_2f_1|_{g_2}^2+\lambda e^{2f_2}$$
		and thus $\Delta_2 f_1=|\di_2f_1|_{g_2}^2$ on $U$.
		
		This shows that the function $\varphi:=e^{-2f_2}(\Delta_2 f_1-|\di_2f_1|_{g_2}^2)$ vanishes on $U$ and $\overset{\circ}{C_2}$. Furthermore, using \eqref{Ric11s} we deduce that $\varphi=\lambda$ on $\overset{\circ}{C_1}$. In particular $\di\varphi=0$ on $\overset{\circ}{C_1}\cup \overset{\circ}{C_2}\cup U$ which is dense in $M$, whence $\varphi$ is constant. Moreover $U\ne\emptyset$ by assumption, so $\varphi=0$ on $M$. This shows that the previous relation  $\Delta_2 f_1=|\di_2f_1|_{g_2}^2$ actually holds on $M$. Like before, integrating over the leafs $\{x_1\}\times M_2$ yields $\di_2f_1=0$ on $M$, so $M=C_2$. Thus $U=\emptyset$, contradicting our assumption. This concludes the proof of Case 3, and thus of Lemma \ref{keylemma}.
		\end{proof}
We can now finish the proof of Theorem \ref{mainthm}. By Lemmas \ref{equivlemma} and  \ref{keylemma}, the Einstein metric $g$ representing the conformal product structure on $M_1\times M_2$ can be written $g=e^{2f_1}g_1+e^{2f_2}g_2$ where $g_1,g_2$ are Riemannian metrics on $M_1,M_2$ respectively, and $f_1,f_2$ are functions on $M_1\times M_2$ satisfying $\di_1 f_1=0$ and $\di_1\di_2 f_2=0$. The first equation shows that in fact $f_1\in C^\infty(M_2)$, whereas the second equation together with Lemma \ref{lemmasum} shows that $f_2=a_1+a_2$ for some functions $a_1\in C^\infty(M_1)$ and $a_2\in C^\infty(M_2)$. Thus the conformal class $c$ can be written as
$$c=[g]=[e^{2f_1}g_1+e^{2f_2}g_2]=[e^{2f_1}g_1+e^{2a_1+2a_2}g_2]=[e^{-2a_1}g_1+e^{-2f_1+2a_2}g_2]=[h_1+h_2],$$
where $h_1:=e^{-2a_1}g_1$ is a metric on $M_1$ and $h_2:=e^{-2f_1+2a_2}g_2$ is a metric on $M_2$. This concludes the proof of Theorem \ref{mainthm}.
	
As already mentioned in the introduction, the fact that the Einstein metric $g$ on $M_1\times M_2$ is conformal to the product metric $h_1+h_2$, implies by \cite[Thm. 3.2 and Cor. 3.4]{kr2016} that the conformal factor between $g$ and $h_1+h_2$ is a function which only depends on $M_1$ or on $M_2$, {\em i.e.}  $g$ is a warped product metric. However, the complete classification of warped product Einstein metrics on compact manifolds is not yet available \cite{hpw2012}, except when the base is 1-dimensional, cf. \cite{k1988,kr1997,kr2009}.

\end{document}